\pgfplotsset{compat=1.3}		
\newlength\figureheight 		
\newlength\figurewidth 			
\newcommand{\eigplot}{asterisk}
\newcommand{\shiftplot}{square}
\newcommand{\QNoneplot}{o}
\newcommand{\QNoneplotcolor}{blue}
\newcommand{\QNtwoplot}{diamond}
\newcommand{\QNtwoplotcolor}{red}
\newcommand{\QNthreeplot}{+}
\newcommand{\QNthreeplotcolor}{black}
\newcommand{\QNfourplot}{asterisk}
\newcommand{\QNfourplotcolor}{green}
  \pgfplotsset{
    compat=newest,
    tick label style={font=\scriptsize},
    label style={font=\scriptsize},
    legend style={font=\scriptsize}
  }
     \renewcommand{\tikzsetnextfilename}[1]{}
\newcommand{\ej}[1]{#1}
\newcommand{\gm}[1]{#1}
\newcommand{\ak}[1]{#1}
\title{Disguised and new Quasi-Newton methods for nonlinear eigenvalue problems}
\author{E. Jarlebring,  A. Koskela, G. Mele}
\date{\today}
\begin{document}

\maketitle

\begin{abstract}
  In this \ej{paper} we take a \ej{quasi-Newton} approach to 
  nonlinear eigenvalue problems (NEPs) of the type \ej{$M(\lambda)v=0$}, where $M:\CC\rightarrow\CC^{n\times n}$ is a holomorphic function.
  We investigate which types of approximations of 
  the \gm{Jacobian matrix} lead to competitive algorithms, and provide
  convergence theory. 
  The convergence analysis is based on theory for quasi-Newton methods
  and Keldysh's theorem for NEPs. We derive new algorithms and
  also show that several well-established methods for NEPs
  can be interpreted as quasi-Newton methods,
and thereby provide insight to their convergence behavior. 
  In particular, we establish quasi-Newton interpretations of
  Neumaier's residual inverse iteration and Ruhe's method of successive linear
  problems. 

\end{abstract}

\section{Introduction}\label{sec:intro}

One of the most common techniques to improve
the convergence \ak{or} efficiency of
Newton's method for nonlinear systems
of equations \ak{is to replace the Jacobian matrix
with a different matrix.}
\gm{Among these quasi-Newton method constructions, 
sometimes called inexact Newton methods,  
the most common variation is to keep the Jacobian matrix
constant. 
The factorization of this matrix can be precomputed before carrying out
the iterations.}
\ak{This is beneficial, e.g., in situations where the problem
stems from a discretization of a PDE, as the resulting system} is often large and the \gm{Jacobian matrix} is sparse
with a structure allowing a sparse LU-factorization to be pre-computed.

In this paper we consider nonlinear eigenvalue problems (NEPs) of the type  
\begin{equation}\label{eq:nep}
   M(\lambda)v=0, \;\;v\neq 0
\end{equation}
where $M:\Omega\rightarrow\CC^{n\times n}$. 
There are various flavors of Newton's method
available in the literature (further discussed below) for this class of NEPs.
Some of these methods
do have the property that the matrix in the linear system
to be solved in every iteration remains constant.
However, these methods are in general not seen
as \gm{Jacobian matrix} modifications of Newton's method, 
but are 
often derived from quite different viewpoints.
In this paper we investigate \ak{methods
resulting} from modifying the \gm{Jacobian matrix} in
various ways, and illustrate differences, similarities and
efficiency of the resulting methods. It turns out
that several well-established approaches for NEPs can
be viewed as quasi-Newton methods.


In the NEP-class \gm{that} we consider in this paper
$\Omega\subset\CC$ is a closed set, $M$ is analytic in $\Omega$
and we suppose $\lambda\in\Omega$.
We call the vector $v$ a (right) eigenvector if it satisfies \eqref{eq:nep}
\ak{and $u$ the left eigenvector if it satisfies}
\begin{equation}\label{eq:nep2}
   u^HM(\lambda)=0,\;\;u\neq 0.
\end{equation}
\ej{We call $(\lambda,v,u)$ an eigentriplet of \eqref{eq:nep}.}
Without loss of generality we phrase the NEP as
\ak{a system of equations}
\begin{equation}\label{eq:augsys}
  F\left(\begin{bmatrix}
    v\\\lambda
  \end{bmatrix}\right):=
\begin{bmatrix}
  M(\lambda)v\\
  c^Hv-1
\end{bmatrix}=0
\end{equation}
which is equivalent to \eqref{eq:nep} 
if $c\in\CC^{n}$ is not orthogonal to the eigenvector $v$. This condition is 
not problematic in practice since $c$ can be chosen freely \ak{
and thus }will generically not be orthogonal to any eigenvector.
The quasi-Newton approach to \eqref{eq:augsys} consists of
generating sequences of approximations $(\mu_1,x_1), (\mu_2,x_2),\ldots$
from the relation
\begin{equation}\label{eq:QN}
\tilde{J}_k\begin{bmatrix}
  x_{k+1}-x_k\\
  \mu_{k+1}-\mu_k
\end{bmatrix}=
-\begin{bmatrix}
  M(\mu_k)x_k\\
  c^Hx_k-1
\end{bmatrix}=-F_k
\end{equation}
where $\tilde{J}_k$ is an approximation of the \gm{Jacobian matrix}
\begin{equation}\label{eq:jacobian}
  \tilde{J}_k\approx J_k=J\left(\begin{bmatrix}
    x_k\\\mu_k
  \end{bmatrix}\right):= 
  \begin{bmatrix}
M(\mu_k) & M'(\mu_k)x_k\\
c^H & 0
  \end{bmatrix}
\end{equation}
The eigenvector and eigenvalue
updates will be denoted $\Delta x_k=x_{k+1}-x_k$ and $\Delta\mu_k=\mu_{k+1}-\mu_k$.


We consider four specific \ak{modifications of the} \gm{Jacobian matrix}, briefly
justified as follows.
%
From a quasi-Newton perspective,
the most common approach consists of
keeping the \gm{Jacobian matrix} constant, i.e., setting
\begin{equation}\label{eq:QN1}
\widetilde{J}_{1,k}:=
\begin{bmatrix}
M(\sigma) & M'(\sigma)x_0\\
 c^H & 0 
\end{bmatrix},
\end{equation}
where $\sigma=\lambda_0$ is the starting value for the eigenvalue and $x_0$ the
starting vector for the eigenvector \ak{approximation}.
In this manuscript we 
refer to this as Quasi-Newton 1 (QN1).
In Section~\ref{sec:QN1}
we show how \eqref{eq:QN}
with \gm{Jacobian matrix} approximation \eqref{eq:QN1} can be
reformulated such that in every iteration we need to solve one linear system
associated with the matrix $M(\sigma)$. 

We shall later show (in Section~\ref{sec:QN1}) that a more accurate
approximation of the \gm{Jacobian matrix} leads to an algorithm
which has the same computational cost per iteration as QN1, i.e.,
it involves the solution of one linear system with the matrix $M(\sigma)$ per iteration.
More precisely, we keep only the (1,1)-block constant by setting
\begin{equation}  \label{eq:QN2}
\widetilde{J}_{2,k}
:=
\begin{bmatrix}
M(\sigma) & M'(\mu_k)x_k\\
 c^H & 0 
\end{bmatrix}.
\end{equation}
The quasi-Newton method \eqref{eq:QN} with \gm{Jacobian matrix} approximation \eqref{eq:QN2} will be refered to as Quasi-Newton 2 (QN2).

We also investigate a method (which we call Quasi-Newton 3)
corresponding to keeping  the $(1,1)$-block constant as in QN2, but also replace the derivative in the
  (1,2)-block. We replace the derivative with a  finite difference involving the future eigenvalue
  approximation $\lambda_{k+1}$, i.e., we set
  \begin{equation}
    \label{eq:QN3}
\widetilde{J}_{3,k}
:=
\begin{bmatrix}
M(\sigma) & M[\mu_{k+1},\mu_k]x_k\\
 c^H & 0 
\end{bmatrix},
\end{equation}
where we use the standard notation \gm{for divided differences}
\begin{align}
 M[\lambda,\mu]  = 
 \begin{cases}
 \frac{M(\lambda)-M(\mu)}{\lambda-\mu} 	& \lambda \ne \mu	\\ 
 M'(\lambda) 								& \lambda = \mu.
 \end{cases}
\end{align}

  Note that the modification of the \gm{Jacobian matrix}
  in \eqref{eq:QN3} makes the iteration \eqref{eq:QN} implicit
in the sense that \ak{the formula \eqref{eq:QN} for the
new approximation $(\lambda_{k+1},v_{k+1})$ involves} $\lambda_{k+1}$ in a nonlinear way. 
 Many implicit variations
  of Newton's method have been considered in the literature, see e.g.,
  \cite{homeier2003modified,kou2007improvements} and references therein. It turns out that certain implicit variations
of Newton's method improve \ak{the convergence and sometimes even increase} the convergence order.
  In contrast to many other implicit Newton methods, our choice of \ak{the} \gm{Jacobian matrix} is done with the goal
\ak{of having} a method whose iterates can be
computed without solving a (computationally
demanding)
  nonlinear system of equations.
This is possible for the specific choice \eqref{eq:QN3}, as we shall illustrate in Section~\ref{sec:elimination}.

  We  consider one more modified \gm{Jacobian matrix} which also leads to an implicit method, but now implicit in the  eigenvector.
  The vector $x_k$ in the (1,2) block is replaced by the future vector $x_{k+1}$, such that
  we obtain
  \begin{equation}\label{eq:QN4}
\widetilde{J}_{4,k}:=
\begin{bmatrix}
M(\mu_k) & M'(\mu_k)x_{k+1}\\
 c^H & 0
\end{bmatrix}.
\end{equation}

  \ak{Our study has the following conclusions and contributions:}
    
\begin{itemize}
\item QN1 and QN2 can be phrased as algorithms only involving one linear solve with $M(\sigma)$ per iteration (Section~\ref{sec:QN1}-\ref{sec:QN2})
\item QN3 is equivalent to Neumaier's residual inverse iteration  \cite{Neumaier:1985:RESINV} (Section~\ref{sec:QN3})
\item QN4 is equivalent to Ruhe's method of successive linear problems \cite{Ruhe:1973:NLEVP} (Section~\ref{sec:QN4})
\item We provide exact characterizations of the convergence of factor for QN1 and QN2,
  and establish that the convergence factor of QN2 and QN3 are identical 
  \gm{(Section~\ref{sec:conv_QN12}-\ref{sec:conv_QN3})}
\item We show how to adapt fundamental theory for inexact Newton method \cite{Dembo:1982:INEXACT} to study QN4  
\gm{(Section~\ref{sec:conv_QN4})}
\item We provide generalizations of convergence rate dependence on eigenvalue clustering analogous to
  methods for linear eigenvalue methods 
  \gm{(Section~\ref{sec:interpretations})}
\end{itemize}
\ej{The} \ak{studied properties of the methods are illustrated in numerical simulations} in
Section~\ref{sec:numerics}.

Newton's method for linear and nonlinear eigenvalue problems has been studied
for decades, and the field is still  under active development, as can be observed
in the  summaries in \cite{Mehrmann:2004:NLEVP,Voss:2013:NEPCHAPTER}.
\ak{The technique of deriving} methods using an augmented system as in \eqref{eq:augsys} was
investigated already in 1950's by Unger \cite{Unger:1950:NICHTLINEARE},
and was key to characterizing
the relationship of inverse iteration (for linear eigenvalue problems)
as described by  Peters and Wilkinson in \cite{Peters:1979:INVERSE}.
Newton's method based on solving the nonlinear equation $\beta(\lambda)=0$
where $M(\lambda)v=\beta(\lambda)e_p$ was presented in \cite{Osborne:1964:NEP} and independently by an essentially equivalent 
\ak{procedure by} Lancaster \cite{lancaster1966lambda}.
In 1970's Ruhe \cite{Ruhe:1973:NLEVP} also pointed out the relevance
of Newton-type methods and how they relate to \ak{the} inverse iteration.
He used the augmented systems of
equations to derive variations of \ak{the inverse iteration for NEPs
as} well as the method of successive linear problems.

Newton-type methods have also more recently received considerable
attention, e.g., in the PhD thesis of Schreiber \cite{Schreiber:2008:PHD},
\ak{ where two-sided generalizations of inverse iteration
methods as well as Jacobi-Davidson methods are developed.
The recent results by Effenberger  and Kressner
\cite{Kressner:2009:BLOCKNEWTON,Effenberger:2013:ROBUST}
contain a generalization of Newton-type methods that allows the computation
of several eigenvalues simultaneously. This block
Newton approach has been successful} in the setting of
continuation methods \cite{beyn2011continuation}.
Variants of Newton methods
where the linear system associated with $M(\sigma)$ is
only solved to some accuracy have been studied
in \cite{Szyld:2012:LOCAL}. \ak{A recent variant}
of the rational Krylov method can also be interpreted in a Newton-setting
\cite{VanBeeumen:2013:RATIONAL}.
There are several convergence results for \ak{the residual inverse iteration
and other Newton type methods
\cite{Szyld:2012:LOCAL,Szyld:2014:INVPAIRS,Jarlebring:2011:RESINVCONV,unger2013convergence} which are mostly presented in a separated fashion}
without using quasi-Newton interpretations and results for quasi-Newton methods.

\section{Explicit reformulations of the quasi-Newton methods}\label{sec:elimination}
The formulations of the QN-iterations above are not \ak{practical in general.}
In fact, it is even questionable to call the formulations of QN3 and QN4
iterative, since the \gm{Jacobian matrix} depends on
quantities \ak{in an implicit way.}
Nevertheless, it turns out that \ak{certain reformulations of QN3 and QN4
allow us to explicitly compute sequences of approximations},
\ej{which satisfy \eqref{eq:QN}}.
Also QN1 and QN2 \ak{have to be reformulated} in order to become practical. We show how 
\ak{to carry out this reformulation to obtain algorithms
which do
not require solving many linear systems for different matrices,
but only for $M(\sigma)$. }

%
%
%
%
%
We first make an observation in common \ak{for all the considered} methods.
The last row of the correction \ak{equation \eqref{eq:QN} is the same for all methods
and for all the choices of $\tilde{J}_k$  as in 
\eqref{eq:QN1}-\eqref{eq:QN4}} \ej{we see} \ak{that $c^H(x_{k+1}-x_k)=-c^Hx_k+1$. }
By induction, this implies that 
\begin{equation}\label{eq:normalized}
c^Hx_1=c^Hx_2=\cdots=1,
\end{equation}
i.e.,
all iterates (except possibly the first iterate) are normalized.

\subsection{The $n$-dimensional form of QN1}\label{sec:QN1}
The iteration \eqref{eq:QN} with \ak{the} \gm{Jacobian matrix} 
approximation \eqref{eq:QN1} can be reformulated as follows.
 By multiplying the first block equation in
\eqref{eq:QN} with $M(\sigma)^{-1}$ from the left yields
\begin{equation}\label{eq:QN1_Delta1}
  \Delta x_k+ \Delta\mu_k M(\sigma)^{-1}M'(\sigma)x_0=-M(\sigma)^{-1}M(\mu_k)x_k=-y_k
\end{equation}
where we \ak{define}
\begin{equation}\label{eq:ykdef}
  y_k:=M(\sigma)^{-1}M(\mu_k)x_k.
\end{equation}
Moreover, by multiplication of \eqref{eq:QN1_Delta1} from the left with $c^H$ and using
the fact that $c^H\Delta x_k=0$ due to \eqref{eq:normalized}
 we have
\begin{equation}\label{eq:Delta_lambda_k}
    \mu_{k+1}-\mu_k=\Delta\mu_k=-\alpha_0 c^Hy_k
 \end{equation}
with
\begin{subequations}
\begin{eqnarray}
  q_0		&:=&	M(\sigma)^{-1}M'(\sigma)x_0\label{eq:q0def}\\
 \alpha_0	&:=&	1/c^Hq_0\label{eq:alphadef}
\end{eqnarray}
\end{subequations}
The above equations can be combined into an algorithm.
As a precomputation we compute $\alpha_0$ in \eqref{eq:alphadef} and $q_0$ in \eqref{eq:q0def},
and in the iteration we compute $y_k$ from \eqref{eq:ykdef},
$\Delta\mu_k$ from \eqref{eq:Delta_lambda_k}
and subsequently update
\begin{equation}\label{eq:vk_update}
  x_{k+1}		=x_k-y_k - \Delta \mu_k  q_0.
\end{equation}
The algorithm is summarized in Algorithm~\ref{alg:QN1}.

\begin{remark}[Properties of Algorithm~\ref{alg:QN1}]
  The advantage of Algorithm~\ref{alg:QN1}
  over the original formulation \eqref{eq:QN}
  consists in the fact that the
  matrix in the linear system to be solved in every step is $M(\sigma)$.
  \gm{Therefore, a pre-factorization can be carried out in the original
  problem size.}
  This is of advantage when the problem
  stems from a partial differential equation and the augmented
  matrix $\tilde{J}_{1,k}$ may be more difficult to factorize.
  Although the \gm{Jacobian matrix} approximation $\tilde{J}_{1,k}\approx J_k$
  is the most common in the context of quasi-Newton approximations,
  it does not appear very competitive in this setting.
%
  \gm{ The other Jacobian matrix approximations, which are more accurate, 
  lead to better methods in terms of convergence and do not in general require more
  computation.}
  \end{remark}

%
\begin{algorithm} 
\caption{Quasi-Newton 1 ($n$-dimensional formulation of \eqref{eq:QN1})}\label{alg:QN1}
\SetKwInOut{Input}{input}\SetKwInOut{Output}{output}
\Input{Initial guess of the eigenpair $\gm{(\mu_0,x_0)}\in\CC\times\CC^{n}$}
\Output{An approximation $\gm{(\mu_k,x_k)}\in\CC\times\CC^{n}$ of  $\gm{(\lambda,v)}\in\CC\times\CC^{n}$}
\BlankLine 
\nl Set $\sigma=\mu_0$ and factorize the matrix $M(\sigma)$ \\
\nl Compute $q_0$ and $\alpha_0$ from \eqref{eq:q0def} with the use of the factorization from Step~1\\
\nl \For{$k = 0,1,2,\dots$}{ 
\nl Compute $y_k$ from \eqref{eq:ykdef} with the use of the factorization from Step~1\\
\nl Compute $\Delta \mu_k$ and $\mu_{k+1}$ from \eqref{eq:Delta_lambda_k}\\
\nl Compute new eigenvector approximation  $x_{k+1}$ from \eqref{eq:vk_update}
}
\end{algorithm}

\subsection{The $n$-dimensional formulation of QN2}\label{sec:QN2}
\ak{A reformulation of \eqref{eq:QN} with the} \gm{Jacobian matrix}
approximation \eqref{eq:QN2} \ak{follows} similar steps
as the derivation in the previous section. 
To simplify the notation we set
\begin{equation}  \label{eq:wdef}
  w_\sigma^H:=c^HM(\sigma)^{-1}.
\end{equation}
The derivation leads up to the formulas
\begin{subequations}
\begin{eqnarray}
    \Delta \mu_k 	&=& 	-\frac{w_\sigma^H M(\mu_k) x_k}{w_\sigma^H M'(\mu_k) x_k} 	\label{eq:lambdak_update2}	\\
    z_k			&=&	\Delta \mu_k M'(\mu_k) x_k + M(\mu_k)x_k	\label{eq:zkdef}	\\
    \mu_{k+1}	&=&	\mu_k + \Delta \mu_k				\\
    x_{k+1}		&=&	x_k - M(\sigma)^{-1} z_k	\label{eq:vk_update2}
\end{eqnarray}
\end{subequations}
  More precisely, formula \eqref{eq:lambdak_update2} stems from left-multiplying
  the first block row in \eqref{eq:QN} with $w_\sigma^H=c^HM(\sigma)^{-1}$
  and \eqref{eq:zkdef} and \eqref{eq:vk_update2} stems from left-multiplying
  the first block row in \eqref{eq:QN} with $M(\sigma)^{-1}$.
  We summarize the resulting method in Algorithm~\ref{alg:QN2}.
  \begin{remark}[Properties of Algorithm~\ref{alg:QN2}]
    Note that similar to Algorithm~\ref{alg:QN1}, Algorithm~\ref{alg:QN2}
    requires only one linear solve with the matrix $M(\sigma)$ per iteration.
    Since Algorithm~\ref{alg:QN2}
    corresponds to a more accurate approximation of the \gm{Jacobian matrix},
    it is expected to converge faster than Algorithm~\ref{alg:QN1}.
    This difference characterized theoretically and computationally in
    Section~\ref{sec:conv_QN12} and Section~\ref{sec:numerics}.

    Algorithm~\ref{alg:QN2} involves the vector $w_\sigma$, which can be computed as
    in \eqref{eq:wdef}, i.e., it would require one additional linear solve with $M(\sigma)^H$.
    This extra linear solve
    can however be avoided by treating $w_\sigma$ as a fixed vector (chosen by the user)
    and then using that 
$c$ is arbitrary such that we can chose
it as  $c^H=w_\sigma^{T}M(\sigma)$. This works rather
    well in practice, but fixing $w_\sigma$ instead of $c$
    may make the convergence factor larger if $\sigma$ is close to the
    eigenvalue, as we shall further illustrate in
    Section~\ref{sec:interpretations}.
  \end{remark}
\begin{algorithm} 
\caption{Quasi-Newton 2 ($n$-dimensional formulation of  \eqref{eq:QN2})}\label{alg:QN2}
\SetKwInOut{Input}{input}\SetKwInOut{Output}{output}
\Input{Initial guess of the eigenpair $\gm{(\mu_0,x_0)}\in\CC\times\CC^{n}$}
\Output{An approximation $\gm{(\mu_k,x_k)}\in\CC\times\CC^{n}$ of  $\gm{(\lambda,v)}\in\CC\times\CC^{n}$}
\BlankLine 
\nl Set $\sigma=\mu_0$ and factorize the matrix $M(\sigma)$ \\
\nl \For{$k = 0,1,2,\dots$}{
  \nl Compute $u:=M(\mu_k)x_k$ and $w:=M'(\mu_k)x_k$\\
  \nl Compute $\Delta\mu_k$ according to \eqref{eq:lambdak_update2} using $u$ and $w$\\
  \nl Compute $z_k=\Delta\mu_ku+w$\\
  \nl Compute new eigenpair approximation $\gm{(\mu_k,x_k)}$ from \eqref{eq:lambdak_update2}
and \eqref{eq:vk_update2} by using the factorization computed in Step~1.
}
\end{algorithm}

\subsection{The explicit formulation of  QN3 is residual inverse iteration}\label{sec:QN3}

Although the Jacobi approximation $J_k\approx\tilde{J}_{3,k}$
in \eqref{eq:QN3} involves eigenvalue approximations not
yet computed, we can proceed with an elimination procedure
similar to Section~\ref{sec:QN1} and Section~\ref{sec:QN2}. 

We multiply the first block in equation \eqref{eq:QN}
from the left with $w_\sigma^H=c^HM(\sigma)^{-1}$.
This gives  an equation which  we can simplify as follows:
\begin{subequations}
\begin{eqnarray}
   c^H\Delta x_k+w_\sigma^HM[\mu_{k+1},\mu_k]x_k\Delta\mu_k&=&-w_\sigma^HM(\mu_k)v_k\\
   w_\sigma^H\frac{M(\mu_{k+1})-M(\mu_k)}{\mu_{k+1}-\mu_k}x_k\Delta\mu_k&=&-w_\sigma^HM(\mu_k)v_k    \\
   w_\sigma^HM(\mu_{k+1})x_k&=&0.\label{eq:rf}
\end{eqnarray}
\end{subequations}
Note that \eqref{eq:rf} is a scalar-valued equation, 
in one unknown variable $\mu_{k+1}$, 
 since $x_k$ can be 
viewed as a known vector.
In fact, if we treat $\lambda$ as a function of $x$,
this is the inverse function of $w^HM(\lambda)x=0$ is, which is commonly known as the Rayleigh functional
or generalized Rayleigh quotient \cite{Voss:2004:ARNOLDI,Werner:1970:RAYLEIGH,Voss:1982:MINIMAX}. 
This function generally exists, at least 
 in a neighborhood of
a simple eigenvalue \cite[Proposition~2.1]{Jarlebring:2011:RESINVCONV}, and it is a computable quantity for many problems.

By multiplying the first block row in \eqref{eq:QN} from the left by $M(\sigma)^{-1}$, we obtain
\begin{subequations}
\begin{eqnarray}
  \Delta x_k+M(\sigma)^{-1}(M(\mu_{k+1})-M(\mu_k))x_k&=&-M(\mu_k)x_k\\
  x_{k+1}&=&x_k-M(\sigma)^{-1}M(\mu_{k+1})x_k\label{eq:QN3_xupdate}
\end{eqnarray}
\end{subequations}
Under the assumption that the Rayleigh functional in \eqref{eq:rf} is computable, the relations
 \eqref{eq:rf} and \eqref{eq:QN3_xupdate}
form an explicit algorithm. In fact, 
this algorithm is already
extensively used in current research, where it is commonly known
as \emph{residual inverse iteration} and it was first introduced
by Neumaier in \cite{Neumaier:1985:RESINV}.
Residual inverse iteration also forms the basis of some recent
state-of-the-art algorithms
for NEPs, most importantly the nonlinear Arnoldi method \cite{Voss:2004:ARNOLDI}.

\begin{theorem} \label{thm:QN3} The Quasi-Newton method \eqref{eq:QN} with the modified 
  \gm{Jacobian matrix} \eqref{eq:QN3} is equivalent
  to residual inverse iteration as described in \cite{Neumaier:1985:RESINV}.
\end{theorem}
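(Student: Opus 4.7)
The plan is to observe that the derivation already preceding the theorem statement effectively carries out the bulk of the work; what remains is to identify the two equations it produces with the two update rules of Neumaier's residual inverse iteration as stated in \cite{Neumaier:1985:RESINV}.

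First I would collect what the preceding derivation establishes. Left-multiplying the first block row of \eqref{eq:QN} (with $\tilde{J}_k = \tilde{J}_{3,k}$) by $w_\sigma^H = c^HM(\sigma)^{-1}$ and using $c^H\Delta x_k = 0$, which holds from the last block row of \eqref{eq:QN} combined with the induction \eqref{eq:normalized}, yields after cancelling $\Delta\mu_k$ in the divided difference the scalar equation $w_\sigma^H M(\mu_{k+1})x_k = 0$, i.e.\ \eqref{eq:rf}. Separately, left-multiplying the same block row by $M(\sigma)^{-1}$ and simplifying via the same divided-difference identity gives the vector update $x_{k+1} = x_k - M(\sigma)^{-1}M(\mu_{k+1})x_k$, i.e.\ \eqref{eq:QN3_xupdate}. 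Together \eqref{eq:rf} and \eqref{eq:QN3_xupdate} are exact consequences of the QN3 correction equation, so any sequence generated by the quasi-Newton scheme with $\tilde{J}_{3,k}$ must satisfy them.

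Next I would recall Neumaier's residual inverse iteration from \cite{Neumaier:1985:RESINV}: fix a shift $\sigma$ and a left vector $w_\sigma$; given $x_k$, determine $\mu_{k+1}$ implicitly as the (Rayleigh-functional) root of $w_\sigma^H M(\mu)x_k = 0$, then set $x_{k+1} = x_k - M(\sigma)^{-1}M(\mu_{k+1})x_k$ and renormalize. The two steps of this original algorithm coincide line by line with \eqref{eq:rf} and \eqref{eq:QN3_xupdate}. Combining this with the argument above proves both directions of the equivalence, provided the Rayleigh functional is well defined, which by \cite[Proposition~2.1]{Jarlebring:2011:RESINVCONV} holds in a neighborhood of a simple eigenvalue.

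The one technicality I anticipate is reconciling normalization conventions. Neumaier's original algorithm scales $x_{k+1}$ to unit norm, while the QN3 iterates are implicitly normalized by $c^H x_{k+1} = 1$ via \eqref{eq:normalized}. Since both \eqref{eq:rf} and \eqref{eq:QN3_xupdate} depend on $x_k$ only through its direction (the scalar equation is homogeneous in $x_k$, and the update is linear in $x_k$), rescaling the iterate at any step only rescales the sequence ray-wise and does not affect the eigenvalue iterates $\mu_k$ nor the geometric iterates $[x_k]$. I would state this explicitly so that the equivalence is interpreted in the natural projective sense, after which Theorem~\ref{thm:QN3} is immediate.
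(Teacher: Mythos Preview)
Your proposal is correct and follows essentially the same approach as the paper: the theorem is stated as a conclusion of the derivation immediately preceding it, which produces \eqref{eq:rf} and \eqref{eq:QN3_xupdate} exactly as you describe, and those are then identified with the two steps of residual inverse iteration. Your explicit treatment of the normalization discrepancy (unit norm versus $c^Hx=1$) via the homogeneity/linearity of the update is a minor elaboration the paper omits, but it does not constitute a different route.
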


\begin{remark}[Relation between quasi Newton variant 2 and residual inverse iteration]\label{rem:resinv}
  Due to the analyticity of $M(\lambda)$, 
  the residual inverse iteration 
\eqref{eq:rf} in \eqref{eq:QN3_xupdate} can also be expressed as
\begin{align*} 
\mu_{k+1} &= \mu_k - 
\frac{w^H M(\mu_k) v_k}{w^H M'(\mu_k) v_k}
 - 
 \sum_{j=2}^{\infty}
 \frac{\Delta \mu_k^j}{j!} 
 \frac{w^H M^{(j)}(\mu_k) v_k}{w^H M'(\mu_k) v_k}, \\
 v_{k+1} &= 
 v_k - 
 M(\sigma)^{-1} 
 \left[ 
 M(\mu_k) + 
 \Delta \mu_k
 M'(\mu_k) 
 \right] v_k 
 - 
 \sum_{j=2}^{\infty} 
 \frac{\Delta \mu_k^j }{j!}
  M(\sigma)^{-1} M^{(j)} (\mu_k) v_k.
\end{align*}
From these formulas we see directly that
for linear eigenvalue problems where $M(\lambda)=A-\lambda I$,
QN2 and residual inverse iteration are equivalent.
Hence, they are both generalizations of the
standard inverse iteration method.
This is consistent with the convergence
analysis in Section~\ref{sec:localconv}
which shows that QN2 and QN3 have the same convergence
factor.

\end{remark}


\subsection{The explicit formulation of QN4 is the method of successive linear problems}\label{sec:QN4}
In the previous subsection we saw that iterates 
satisfying \eqref{eq:QN} with the
modified \gm{Jacobian matrix} \eqref{eq:QN3} can be computed in practice and
\ak{the resulting algorithm} is in fact equivalent to a well-known method.
The Jacobi approximation $J_k\approx \tilde{J}_{4,k}$
in \eqref{eq:QN4}
also involves a quantity which we do not have access to at iteration $k$, the vector $x_{k+1}$.
We now show that similar to QN3, we can 
carry out an elimination
such that the update  can be computed
in an explicit way. This algorithm also turns out to be equivalent to
a well-established method. 

The first block row in \eqref{eq:QN} with approximation \eqref{eq:QN4}
simplifies to 
\begin{subequations}
\begin{eqnarray}
  M(\mu_k)(x_{k+1}-x_k)+\Delta\mu_k M'(\mu_k)x_{k+1}&=&-M(\mu_k)x_k\\
  M(\mu_k)x_{k+1}+\Delta\mu_k M'(\mu_k)x_{k+1}&=&0\label{eq:MSLP}
\end{eqnarray}
\end{subequations}
Since we know that the iterates $x_{1},x_2,\ldots$ are normalized, we directly
identify \eqref{eq:MSLP} as a (linear) generalized eigenvalue problem
where $\Delta\mu_k$ is the eigenvalue. Hence, we can
construct an iteration satisfying \eqref{eq:QN} with Jacobi
approximation \eqref{eq:QN4} by repeatedly solving the generalized eigenvalue
problem \eqref{eq:MSLP} and updating the eigenvalue $\mu_{k+1}=\mu_k+\Delta\mu_k$.
This method is known as the method of successive linear
problems and was studied and used by Ruhe in \cite{Ruhe:1973:NLEVP},
where it was justified directly from a Taylor expansion 
of $M(\lambda)$.
\begin{theorem}\label{thm:QN4}
The Quasi-Newton method \eqref{eq:QN} with
  the modified \gm{Jacobian matrix} \eqref{eq:QN4} is equivalent
  to the method of successive linear problems \cite{Ruhe:1973:NLEVP}.
\end{theorem}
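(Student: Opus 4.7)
The plan is to start from the quasi-Newton update \eqref{eq:QN} with the specific Jacobian approximation $\tilde{J}_{4,k}$ in \eqref{eq:QN4}, carry out the algebraic manipulations that are sketched informally before the theorem, and then match the resulting iteration prescription verbatim with Ruhe's method of successive linear problems in \cite{Ruhe:1973:NLEVP}.

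Concretely, the first step is to expand the first block row of \eqref{eq:QN}, which under the choice \eqref{eq:QN4} reads
\begin{equation*}
  M(\mu_k)(x_{k+1}-x_k) + \Delta\mu_k\, M'(\mu_k)\, x_{k+1} = -M(\mu_k)\,x_k.
\end{equation*}
The term $M(\mu_k)x_k$ cancels on both sides and the surviving homogeneous relation is precisely \eqref{eq:MSLP}, that is $\bigl(M(\mu_k)+\Delta\mu_k M'(\mu_k)\bigr)x_{k+1}=0$. This is a linear generalized eigenvalue problem in the pencil $(M(\mu_k),-M'(\mu_k))$ whose eigenvalue is $\Delta\mu_k$ and whose eigenvector is $x_{k+1}$.

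The second step is to fix the scaling of $x_{k+1}$: by the general observation \eqref{eq:normalized}, already established for every choice of $\tilde J_k$ in \eqref{eq:QN1}--\eqref{eq:QN4}, the last block row of \eqref{eq:QN} forces $c^Hx_{k+1}=1$. Combined with \eqref{eq:MSLP}, this determines $x_{k+1}$ uniquely (generically), and one then sets $\mu_{k+1}=\mu_k+\Delta\mu_k$.

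The third step is to compare the prescription just obtained---solve the linear GEP \eqref{eq:MSLP}, pick the eigenpair with a chosen selection rule, normalize via $c^Hx_{k+1}=1$, and update $\mu_{k+1}=\mu_k+\Delta\mu_k$---with Ruhe's method in \cite{Ruhe:1973:NLEVP}. Ruhe derives the same pencil by linearizing $M(\mu_k+\Delta\mu_k)x_{k+1}\approx(M(\mu_k)+\Delta\mu_k M'(\mu_k))x_{k+1}=0$ and then chooses the correction $\Delta\mu_k$ of smallest modulus. The two iterations are therefore identical provided the same selection rule is adopted, and this matching of the eigenvalue selection is the only point requiring care in the proof; everything else is the algebraic simplification above.
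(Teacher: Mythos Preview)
Your proposal is correct and follows essentially the same route as the paper: the paper's argument (given in the text immediately preceding the theorem) likewise expands the first block row of \eqref{eq:QN} with \eqref{eq:QN4}, cancels $M(\mu_k)x_k$ to obtain \eqref{eq:MSLP}, invokes the normalization \eqref{eq:normalized}, and identifies the resulting generalized eigenvalue problem with Ruhe's method. Your explicit remark about the eigenvalue selection rule is a worthwhile addition that the paper leaves implicit.
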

%
%
%
\section{Local convergence analysis}\label{sec:localconv}
\subsection{Convergence factor analysis of QN1 and QN2}\label{sec:conv_QN12}
In order to characterize the convergence of QN1 and QN2 we
will derive a 
first-order result. More precisely, we will describe
the local behavior, if $(x_{k},\mu_k)$ is close to $\gm{(\lambda,v)}$,
we describe the behavior
with a matrix $A\in\CC^{(n+1)\times (n+1)}$ such that
\begin{equation}\label{eq:firstorder}
\begin{bmatrix}
  x_{k+1}-v\\
  \mu_{k+1}-\lambda
\end{bmatrix}=
A\begin{bmatrix}
  x_k-v\\
  \mu_k-\lambda
\end{bmatrix}+
\gm{
\mathcal{O} \left(
\mynorm{ \begin{bmatrix}
  x_k-v\\
  \mu_k-\lambda
\end{bmatrix}
}^2
\right)
}
.
\end{equation}
In general, we have linear convergence, with 
a local convergence
factor given by the spectral radius of $A$.
The explicit form of $A$ for 
our first two quasi-Newton methods are given in
following theorems. 
\begin{theorem}[Local convergence Algorithm~\ref{alg:QN1}]\label{thm:convQN1}
  Suppose the sequence $(\mu_1,x_1)$, $(\mu_2,x_2),\ldots$ is generated by Algorithm~\ref{alg:QN1}
  started with $(\mu_0,x_0)$  
  and suppose the sequence converges to the eigenpair $(\lambda,v)$.
Then, the sequence satisfies \eqref{eq:firstorder} with $A=A_1$ where
\begin{equation} \label{eq:A1def}
A_1=
  \begin{bmatrix}
   (I-\alpha_0 q_0 c^H)   M(\sigma)^{-1}\left( M(\sigma) - M(\lambda) \right)	&
   (I-\alpha_0 q_0 c^H)	  M(\sigma)^{-1} M'(\lambda)v	\\
   \alpha_0 c^H M(\sigma)^{-1}\left( M(\sigma) -  M(\lambda) \right)	&
   \alpha_0 c^H M(\sigma)^{-1} M'(\lambda)v
  \end{bmatrix}.  
\end{equation}
  \end{theorem}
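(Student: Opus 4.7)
The plan is to linearize the iteration map defined by Algorithm~\ref{alg:QN1} around the fixed point $(v,\lambda)$ and then simplify using block-matrix algebra together with the normalization identity \eqref{eq:normalized}. Since Algorithm~\ref{alg:QN1} is the explicit reformulation of the abstract quasi-Newton update \eqref{eq:QN} with the constant choice $\tilde{J}_k \equiv \tilde{J}_1$, the iteration takes the form
\begin{equation*}
\begin{bmatrix} x_{k+1} \\ \mu_{k+1} \end{bmatrix} = \begin{bmatrix} x_k \\ \mu_k \end{bmatrix} - \tilde{J}_1^{-1} F\!\left(\begin{bmatrix} x_k \\ \mu_k \end{bmatrix}\right).
\end{equation*}
Because $F(v,\lambda)=0$, a first-order Taylor expansion of the analytic map $F$ at $(v,\lambda)$ yields
\begin{equation*}
F\!\left(\begin{bmatrix} x_k \\ \mu_k \end{bmatrix}\right) = J(v,\lambda)\begin{bmatrix} x_k - v \\ \mu_k - \lambda \end{bmatrix} + \mathcal{O}\!\left( \left\| \begin{bmatrix} x_k - v \\ \mu_k - \lambda \end{bmatrix} \right\|^2 \right),
\end{equation*}
with $J(v,\lambda)$ the exact Jacobian matrix \eqref{eq:jacobian} evaluated at the fixed point. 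Subtracting the fixed point from both sides of the iteration above identifies the matrix $A$ in \eqref{eq:firstorder} as $A = I - \tilde{J}_1^{-1} J(v,\lambda)$.

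The next step is to compute $\tilde{J}_1^{-1}$ explicitly. The bordered structure of $\tilde{J}_1$ makes the standard block-inversion formula applicable; the scalar Schur complement equals $-c^H M(\sigma)^{-1} M'(\sigma) x_0 = -1/\alpha_0$ by \eqref{eq:q0def}--\eqref{eq:alphadef}, and one obtains
\begin{equation*}
\tilde{J}_1^{-1} = \begin{bmatrix} (I - \alpha_0 q_0 c^H) M(\sigma)^{-1} & \alpha_0 q_0 \\ \alpha_0 c^H M(\sigma)^{-1} & -\alpha_0 \end{bmatrix}.
\end{equation*}
Multiplying $\tilde{J}_1^{-1}$ against $J(v,\lambda)$ block by block and subtracting from the identity expresses the four blocks of $A$ in terms of $M(\sigma)^{-1} M(\lambda)$ and $M(\sigma)^{-1} M'(\lambda) v$. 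The first block column of \eqref{eq:A1def} then follows from the direct algebraic identity
\begin{equation*}
I - \alpha_0 q_0 c^H - (I - \alpha_0 q_0 c^H) M(\sigma)^{-1} M(\lambda) = (I - \alpha_0 q_0 c^H) M(\sigma)^{-1}(M(\sigma) - M(\lambda))
\end{equation*}
and its scalar analogue for the lower-left entry.

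The last step is to recover the second block column of \eqref{eq:A1def}. Here the normalization \eqref{eq:normalized}, which combined with $c^H v = 1$ at the fixed point gives $c^H(x_k - v) = 0$ for every $k \geq 1$, provides the degree of freedom needed to replace any term proportional to $c^H(x_k - v)$ in the first block of the residual by zero, which converts the raw output of the block multiplication into the compact form of \eqref{eq:A1def}. The main obstacle is therefore not conceptual but is the careful bookkeeping through the block inversion, block product, and final simplification. A minor caveat is that \eqref{eq:normalized} is guaranteed only from $k=1$ onwards, so \eqref{eq:firstorder} should be read for $k \geq 1$, a restriction immaterial to the local convergence factor.
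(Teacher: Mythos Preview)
Your approach coincides with the paper's: both linearize the fixed-point map $\varphi(w)=w-\tilde J_1^{-1}F(w)$ at the eigenpair, obtain $A=I-\tilde J_1^{-1}J_*=\tilde J_1^{-1}(\tilde J_1-J_*)$, and invoke the Schur complement to write $\tilde J_1^{-1}$ explicitly. The paper works with the factored form $\tilde J_1^{-1}(\tilde J_1-J_*)$ and simply says that the Schur complement formula yields \eqref{eq:A1def}; it does not use the normalization \eqref{eq:normalized} anywhere in the argument.

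Your final paragraph, however, contains a genuine error. The constraint $c^H(x_k-v)=0$ means the error vector $e_k$ satisfies $[c^H\ \ 0]\,e_k=0$, so one may add to $A$ any rank-one term of the form $w\,[c^H\ \ 0]$ without changing $Ae_k$. Such a modification alters only the \emph{first} block column of $A$, never the second. Hence normalization cannot reconcile the second column of your raw block product with the second column of \eqref{eq:A1def}.

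In fact, carrying out either your computation or the paper's (they are the same) gives for the second column
\[
\begin{bmatrix}
-(I-\alpha_0 q_0 c^H)M(\sigma)^{-1}M'(\lambda)v\\[2pt]
1-\alpha_0 c^H M(\sigma)^{-1}M'(\lambda)v
\end{bmatrix},
\]
because $(I-\alpha_0 q_0 c^H)q_0=0$ and $\alpha_0 c^H q_0=1$. This differs from the second column printed in \eqref{eq:A1def} by a sign in the top entry and an additive $1$ in the bottom entry; the discrepancy is a slip in the displayed $A_1$, not something your argument can or should repair via \eqref{eq:normalized}. The first block column, as you correctly observe, matches \eqref{eq:A1def} by direct algebra with no appeal to normalization.
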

\begin{proof}
  \ej{For notational convenience let
    $\tilde{J}_1:\CC^{n+1}\rightarrow\CC^{n+1}$
denote the function corresponding to 
    $\tilde{J}_{1,k}$ in \eqref{eq:QN1}.
 In this fixed-point setting}, our 
  quasi-Newton method can be expressed as 
  \begin{equation}\label{eq:fixedpoint}
  \gm{
  \begin{bmatrix}
    v_{k+1}\\
    \lambda_{k+1}
  \end{bmatrix}=
  \varphi \left( 
  \begin{bmatrix}
    v_{k}\\
    \lambda_{k}
  \end{bmatrix}
  \right)=
    \begin{bmatrix}
    v_{k}\\
    \lambda_{k}
  \end{bmatrix}-\tilde{J}_{1}
  \left(
  \begin{bmatrix}
    v_{k}\\
    \lambda_{k}
  \end{bmatrix}
  \right)^{-1}
  F\left(\begin{bmatrix}
    v_{k}\\
    \lambda_{k}
  \end{bmatrix} \right).
  }
  \end{equation}
  \gm{
  The $A$-matrix in \eqref{eq:firstorder}, corresponding to
  a fixed point map, is given by the Jacobian of $\varphi$.
  In our case this can be explicitly expressed,}
  \ej{
    by using the structure of the Jacobian matrix
    evaluated in the eigenpair}
    \begin{equation}  \label{eq:Jstar}
      J_*:=J\left(\begin{bmatrix}
        v\\
        \lambda
      \end{bmatrix}\right)=
\begin{bmatrix}
M(\lambda)  & M'(\lambda)v\\
c^H & 0
\end{bmatrix}.
    \end{equation}
  \gm{More precisely, we have}
\gm{
\begin{subequations}\label{eq:fixedpoint_A}
  \begin{eqnarray}\label{eq:fixedpoint_A1}
  A&=&\varphi'\left(
  \begin{bmatrix}
    v\\
    \lambda
  \end{bmatrix} \right)=I-\tilde{J}_{1}\left(\begin{bmatrix}
    v\\
    \lambda
  \end{bmatrix}\right)^{-1}J\left(\begin{bmatrix}
    v\\
    \lambda
  \end{bmatrix}\right)=
  \ej{\tilde{J}_{1}\left(\begin{bmatrix}
    v\\
    \lambda
  \end{bmatrix}\right)^{-1}(\tilde{J_{1,*}}-J_*)=}\\\label{eq:fixedpoint_A2}
  &=&\begin{bmatrix}
    M(\sigma)&M'(\sigma)x_0\\
    c^H&0
  \end{bmatrix}^{-1}%
  \begin{bmatrix}
    M(\sigma)-M(\lambda)& M'(\sigma)x_0-M'(\lambda)v\\
    0 & 0
  \end{bmatrix}
  \end{eqnarray}
  The $A$-matrix in \eqref{eq:A1def} follows from \eqref{eq:fixedpoint_A2} after the application of the Schur complement formula for  $\tilde{J}_{1,*}^{-1}$.
\end{subequations}
}
\end{proof}
\begin{theorem}[Local convergence Algorithm~\ref{alg:QN2}]
  Suppose the sequence $(\mu_1,x_1)$, $(\mu_2,x_2),\ldots$ is generated by Algorithm~\ref{alg:QN1}
  started with $(\mu_0,x_0)$ 
  and suppose the sequence converges to the eigenpair $(\lambda,v)$.
Then, the sequence satisfies \eqref{eq:firstorder} with $A=A_2$ where
\begin{equation}\label{eq:A2}
A_2=\begin{bmatrix}
  (I - \alpha q c^H) M(\sigma)^{-1}
  \left[ M(\sigma) - M(\lambda) \right]	  		&	0	\\
  \alpha c^H  M(\sigma)^{-1}\left( M(\sigma) -  M(\lambda) \right) 	&	0
\end{bmatrix}.
\end{equation}
where $\alpha:= (w_\sigma^H M'(\lambda) v)^{-1}$ and 
$q:= M(\sigma)^{-1} M'(\lambda) v$.
\end{theorem}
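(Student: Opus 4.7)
The plan is to follow the same fixed-point template as in the proof of Theorem~\ref{thm:convQN1}, adapted to the QN2 setting. Define the fixed-point map
\begin{equation*}
\varphi\!\left(\begin{bmatrix} x\\ \mu\end{bmatrix}\right)
=\begin{bmatrix} x\\ \mu\end{bmatrix}
-\widetilde{J}_{2}\!\left(\begin{bmatrix} x\\ \mu\end{bmatrix}\right)^{-1}
F\!\left(\begin{bmatrix} x\\ \mu\end{bmatrix}\right),
\end{equation*}
where, in contrast to the QN1 situation, the approximate Jacobian $\widetilde{J}_{2}$ depends on the current iterate through its $(1,2)$-block $M'(\mu)x$. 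The crucial observation is that when we differentiate the product $\widetilde{J}_{2}^{-1}F$ at the fixed point $(v,\lambda)$, the contribution arising from the derivative of $\widetilde{J}_{2}^{-1}$ is multiplied by $F(v,\lambda)=0$, so it vanishes. Hence the linearization at the fixed point reduces to
\begin{equation*}
A_2=\varphi'\!\left(\begin{bmatrix} v\\ \lambda\end{bmatrix}\right)
=I-\widetilde{J}_{2,*}^{-1}J_{*}
=\widetilde{J}_{2,*}^{-1}\bigl(\widetilde{J}_{2,*}-J_{*}\bigr),
\end{equation*}
with $J_*$ as in \eqref{eq:Jstar}, exactly as in the QN1 argument.

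Next I would evaluate the two ingredients. At $(v,\lambda)$ we have
\begin{equation*}
\widetilde{J}_{2,*}=\begin{bmatrix} M(\sigma) & M'(\lambda)v\\ c^H & 0\end{bmatrix},
\qquad
\widetilde{J}_{2,*}-J_{*}=\begin{bmatrix} M(\sigma)-M(\lambda) & 0\\ 0 & 0\end{bmatrix}.
\end{equation*}
Note that only the $(1,1)$-block of the difference is nonzero, in contrast with the QN1 case; this is exactly why the second column of $A_2$ vanishes. Then I would apply the Schur complement formula for the inverse of the bordered matrix $\widetilde{J}_{2,*}$, using that the Schur complement with respect to $M(\sigma)$ equals $-c^H M(\sigma)^{-1}M'(\lambda)v=-1/\alpha$, so that with $q=M(\sigma)^{-1}M'(\lambda)v$ we obtain
\begin{equation*}
\widetilde{J}_{2,*}^{-1}=\begin{bmatrix} (I-\alpha q c^H)M(\sigma)^{-1} & \alpha q\\ \alpha c^H M(\sigma)^{-1} & -\alpha\end{bmatrix}.
\end{equation*}

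Multiplying this expression by the block-diagonal-like difference $\widetilde{J}_{2,*}-J_{*}$ immediately kills the second block column and yields the stated form of $A_2$ in \eqref{eq:A2}. The main obstacle I anticipate is not algebraic but conceptual: one must justify rigorously that the dependence of $\widetilde{J}_{2,k}$ on $(x_k,\mu_k)$ does not contribute an extra term to the linearization. This is handled by writing $\varphi(x,\mu)=(x,\mu)-G(x,\mu)$ with $G=\widetilde{J}_{2}^{-1}F$, applying the product rule $G'=(\widetilde{J}_{2}^{-1})'F+\widetilde{J}_{2}^{-1}F'$, and invoking $F(v,\lambda)=0$ to discard the first term. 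The remainder of the computation is purely algebraic, a Schur-complement inversion followed by a block multiplication, completely analogous to the QN1 derivation.
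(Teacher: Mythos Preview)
Your proposal is correct and follows the same approach as the paper: express $A_2$ as $\widetilde{J}_{2,*}^{-1}(\widetilde{J}_{2,*}-J_*)$ and then invert $\widetilde{J}_{2,*}$ via the Schur complement. You are in fact slightly more careful than the paper, which simply invokes the QN1 argument without addressing that $\widetilde{J}_{2}$ now depends on $(x,\mu)$; your product-rule remark---that the $(\widetilde{J}_2^{-1})'F$ term vanishes because $F(v,\lambda)=0$---fills precisely that gap.
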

\begin{proof}
  The proof follows the same reasoning as in the proof of Theorem~\ref{thm:convQN1}, except
  that the \gm{Jacobian matrix} of the fixed point map \gm{$\varphi$} in \eqref{eq:fixedpoint_A1}
  in this case becomes
  \[
 A=\begin{bmatrix}
    M(\sigma)&M'(\lambda)x\\
    c^H&0
  \end{bmatrix}^{-1}%
  \begin{bmatrix}
    M(\sigma)-M(\lambda)& 0\\
    0 & 0
  \end{bmatrix}.
  \]
  The application of the Schur complement formula directly leads to \eqref{eq:A2}.
  \end{proof}
\subsection{Local convergence of QN3}\label{sec:conv_QN3}
Note that QN3 is not a fixed point iteration in the formulation
\eqref{eq:QN3}.
However, 
since  residual inverse iteration and QN3 are equivalent in
the sense of Theorem~\ref{thm:QN3}, we already have a convergence
factor available in  \cite{Jarlebring:2011:RESINVCONV}.
More surprisingly, the convergence factor for
residual inverse iteration (given in
\cite{Jarlebring:2011:RESINVCONV})
 is identical to the convergence
factor of QN2. 
\begin{corollary}[Convergence factor equivalence QN2 and QN3] \label{cor:QNfactors}
  The non-zero eigenvalues of $A_2$ given in
  \eqref{eq:A2} are the same as the non-zero
eigenvalues of 
\begin{align} \label{eq:EliasConvRate}
  B=(I - v c^H) M(\sigma)^{-1} \left[ M(\sigma) - M(\lambda) + \frac{1}{w_\sigma^H M'(\lambda) v} M'(\lambda) vw_\sigma^HM(\lambda) \right] 
\end{align}
and the convergence factors of QN2 and QN3 are the same.
\end{corollary}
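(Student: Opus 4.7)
My plan is to reduce the claim to an equality of characteristic polynomials and then collapse the comparison via the matrix determinant lemma. The last column of $A_2$ in \eqref{eq:A2} is identically zero, so $A_2$ inherits a zero eigenvalue from its block structure and its remaining eigenvalues coincide with those of the $n\times n$ block
$$A_2^{(1,1)} := (I - \alpha q c^H)\,M(\sigma)^{-1}\bigl[M(\sigma) - M(\lambda)\bigr].$$
It therefore suffices to prove that $A_2^{(1,1)}$ and $B$ have identical spectra.

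Next I would expand both matrices using $w_\sigma^H = c^H M(\sigma)^{-1}$, the definitions $q = M(\sigma)^{-1}M'(\lambda) v$ and $\alpha = (w_\sigma^H M'(\lambda) v)^{-1}$, the identity $\alpha c^H q = 1$, the eigenvector normalization $c^H v = 1$ inherited from \eqref{eq:normalized}, and $M(\lambda)v = 0$. A direct simplification yields
\begin{align*}
A_2^{(1,1)} &= M(\sigma)^{-1}\bigl[M(\sigma) - M(\lambda)\bigr] - \alpha q c^H + \alpha q\, w_\sigma^H M(\lambda),\\
B &= M(\sigma)^{-1}\bigl[M(\sigma) - M(\lambda)\bigr] - v c^H + \alpha q\, w_\sigma^H M(\lambda).
\end{align*}
Subtracting gives $B - A_2^{(1,1)} = u\, c^H$ with $u := \alpha q - v$, a rank-one correction.

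The matrix determinant lemma then finishes the proof. Two small identities are central: $c^H u = \alpha c^H q - c^H v = 1 - 1 = 0$, and $c^H A_2^{(1,1)} = 0$ (because $c^H(I - \alpha q c^H) = (1 - \alpha c^H q) c^H = 0$). The second identity tells us that $c^H$ is a left eigenvector of $A_2^{(1,1)}$ with eigenvalue $0$, so for every $\mu \neq 0$ in the resolvent set we have $c^H(\mu I - A_2^{(1,1)})^{-1} = c^H/\mu$, and hence $c^H(\mu I - A_2^{(1,1)})^{-1} u = (c^H u)/\mu = 0$. The matrix determinant lemma then gives
$$\det(\mu I - B) = \det(\mu I - A_2^{(1,1)})\,\bigl(1 - c^H(\mu I - A_2^{(1,1)})^{-1} u\bigr) = \det(\mu I - A_2^{(1,1)})$$
on a cofinite set of $\mu$; as polynomial identities they then hold everywhere, so the spectra of $A_2^{(1,1)}$ and $B$ coincide with multiplicities, and in particular their non-zero eigenvalues agree. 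The convergence factor statement is immediate since both QN2 and QN3 are locally governed by fixed-point matrices whose convergence factors equal their spectral radii, and the spectral radius is determined by the non-zero eigenvalues.

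The main obstacle is the algebraic bookkeeping in the second step: one must carefully distribute $(I - \alpha q c^H)$ and $(I - v c^H)$ against $M(\sigma)^{-1}[M(\sigma) - M(\lambda)]$ and $\alpha M(\sigma)^{-1} M'(\lambda) v\, w_\sigma^H M(\lambda)$, applying $c^H v = 1$, $\alpha c^H q = 1$, and $M(\lambda) v = 0$ at the right places, and observe that everything collapses to a single rank-one correction whose left factor $\alpha q - v$ is annihilated by $c^H$. Once this structural fact is in hand, the matrix determinant lemma does the rest essentially for free.
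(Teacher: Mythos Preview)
Your proof is correct and rests on the same structural facts as the paper's: both reduce $A_2$ to its $(1,1)$-block $C$, use $c^H C = 0$ (from $\alpha c^H q = 1$), and exploit that $B$ and $C$ differ by a rank-one term built from $c^H$. The paper packages the last step a bit differently: it observes the multiplicative relation $B = C(I - vc^H)$ (which is exactly your additive identity $B = C + (\alpha q - v)c^H$, since $M(\sigma)^{-1}[M(\sigma)-M(\lambda)]v = v$) and then uses a one-line eigenvector argument --- if $Cz=\gamma z$ with $\gamma\neq 0$ then $c^H z=0$, hence $C(I-vc^H)z=Cz$ --- instead of the matrix determinant lemma. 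Your route has the small advantage of yielding equality of characteristic polynomials and hence matching algebraic multiplicities. One point to tighten: for the convergence-factor claim for QN3 the paper does not argue from a generic fixed-point principle but invokes Theorem~\ref{thm:QN3} together with \cite[Theorem~3.1]{Jarlebring:2011:RESINVCONV} to identify the factor with $\rho(B)$; you should make that dependence explicit, since QN3 in the form \eqref{eq:QN3} is not itself a smooth fixed-point iteration.
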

\begin{proof}
  Since QN3 is equivalent to residual inverse iteration
  according to Theorem~\ref{thm:QN3} we can directly use the convergence
  characterization in \cite{Jarlebring:2011:RESINVCONV}. More precisely,
  \cite[Theorem~3.1]{Jarlebring:2011:RESINVCONV} states
  that the convergence factor of residual inverse
  iteration is the largest eigenvalue of the matrix
  $B$ in \eqref{eq:EliasConvRate}.

  It remains to show that the non-zero eigenvalues of $B$
  are the same as the non-zero
  eigenvalues of $A_2$ given in \eqref{eq:A2}.
  Clearly, the non-zero eigenvalues of $A_2$ are the non-zero eigenvalues
  of the (1,1)-block of $A_2$.
  The equivalence is based on the general property that if
  a matrix $C$ and vectors $c$ and $v$ satisfy,
  $c^HC=0$ and $c^Hv=1$, then $C$ and $C(I-vc^H)$ have 
  same non-zero eigenvalues. This can be seen from the fact that
  if $\gamma_iz_i=Cz_i$, then  $\gamma_ic^Hz_i=c^HCz_i=0$ and $c^Hz_i=0$ if $\gamma_i\neq 0$,
  i.e., $C(I-vc^H)z_i=Cz_i-vc^Hz_i=\gamma_iz_i$.
  By using this general property where $C$ is the (1,1)-block
  of \eqref{eq:A2}, we obtain $B=C(I-vc^H)$ where $B$ is given in \eqref{eq:EliasConvRate}.
\end{proof}
\subsection{Local convergence of QN4}\label{sec:conv_QN4}
The quasi-Newton method corresponding to
\eqref{eq:QN4} is different in character
 in comparison to the other
\gm{quasi-Newton methods we have considered. This methos does not involve the computation of a linear system for a}
constant matrix.
\ak{Although some convergence results for the method
of successive linear problems (and therefore also QN4)
are} available in the literature \cite{Jarlebring:2012:CONVFACT},
it is natural in our \ak{quasi-Newton approach} to
characterize the convergence using general results
for quasi-Newton methods.
It turns out \gm{that} \eqref{eq:QN4} is a very accurate approximation
of the \gm{Jacobian matrix} and we can apply 
results for quasi-Newton methods given in \cite{Dembo:1982:INEXACT}.

The characterization in \cite{Dembo:1982:INEXACT} is
mainly based on a quantity which describes
the inexactness the quasi-Newton method by comparing it with a
step involving the exact \gm{Jacobian matrix}.
More precisely, we consider the vector $r_k$ in 
\cite{Dembo:1982:INEXACT}, which in our context becomes
\[
r_k:=J_k
\begin{bmatrix}
  x_{k+1}-x_k\\
  \mu_{k+1}-\mu_k
\end{bmatrix}
+\begin{bmatrix}
M(\mu_k)x_k\\
c^Hx_k-1
\end{bmatrix}=\begin{bmatrix}
(\mu_{k+1}-\mu_k)M'(\mu_k)(x_{k+1}-x_{k})\\
0
\end{bmatrix}.
\]
Various results are given \ej{in \cite{Dembo:1982:INEXACT}}
in terms of the norm of $r_k$.
In particular, the result \cite[Theorem 3.3]{Dembo:1982:INEXACT}
\ak{demonstrates that we have a local} quadratic convergence
if  $\|r_k\|\le \mathcal{O}(\|F_k\|^2)$.
Due to the fact that all iterates are normalized 
as in \eqref{eq:normalized},
this condition can be simplified \ak{in our case and stated as}
\begin{equation}\label{eq:rkQN4}
\gm{
\|r_k\|\le
\mathcal{O}\left(\mynorm{F\left(\begin{bmatrix}x_k\\\mu_k\end{bmatrix}\right)}^2 \right)=
\mathcal{O}(\|M(\mu_k)x_k\|^2).}
\end{equation}
In order to use this result, we now suppose
\ej{that the Jacobian matrix in the eigenpair
denoted $J_*$ and defined by \eqref{eq:Jstar}}
 is invertible.
 \ej{The invertability}
 \gm{condition is satisfied if the eigenpair $(\lambda,v)$ is 
such that $c^Hv\neq 0$ and the eigenvalue $\lambda$ is simple.
}
This implies that the function
$F:\CC^{n+1}\rightarrow \CC^{n+1}$ is invertible in a neighborhood of $(\lambda,v)$.
If $(\mu_k,x_k)$ is in this neighborhood, we have
from the implicit function theorem that
\begin{equation}\label{eq:errexp}
\begin{bmatrix}
  x_k-v\\
 \mu_k-\lambda
\end{bmatrix}=-J_*^{-1}\begin{bmatrix}
M(\mu_k)x_k\\
0
\end{bmatrix}+O(\|M(\mu_k)x_k\|^2).
\end{equation}


\begin{theorem}[Convergence QN4]
  Suppose $(\mu_1,x_1),(\mu_2,x_2),\ldots$ are iterates satisfyng \eqref{eq:QN4}
  and suppose they converge to the eigenpair $(\lambda,v)$.
  If $(\lambda,v)$ is a simple or semi-simple eigenpair,
  then $(\mu_k,x_k)$ converges at least quadratically.
\end{theorem}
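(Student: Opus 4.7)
The plan is to instantiate \cite[Theorem~3.3]{Dembo:1982:INEXACT} in our setting, using the explicit form of $r_k$ already computed just before the theorem statement and the error expansion \eqref{eq:errexp}. Recall that Dembo--Eisenstat--Steihaug's result states that an inexact Newton iteration with residual $r_k$ converges locally quadratically provided $\|r_k\|=\mathcal{O}(\|F_k\|^2)$, and the observation \eqref{eq:rkQN4} reduces this to showing
\[
\|r_k\|=\mathcal{O}\bigl(\|M(\mu_k)x_k\|^2\bigr).
\]

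First I would observe that for QN4 the residual takes the explicit form
\[
r_k=\begin{bmatrix}\Delta\mu_k\,M'(\mu_k)\,\Delta x_k\\ 0\end{bmatrix},
\]
which is bilinear in the step $(\Delta x_k,\Delta\mu_k)$. Since $M'$ is bounded on a neighborhood of $\lambda$, it suffices to prove that both $|\Delta\mu_k|$ and $\|\Delta x_k\|$ are $\mathcal{O}(\|F_k\|)$.

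To control the step size I would use that the modified Jacobian $\tilde J_{4,k}$ in \eqref{eq:QN4} satisfies $\tilde J_{4,k}\to J_*$ as $k\to\infty$, because $\mu_k\to\lambda$ and $x_{k+1}\to v$ by assumption. By the simplicity (resp. semi-simplicity together with $c^Hv\neq 0$) hypothesis, $J_*$ defined in \eqref{eq:Jstar} is invertible, so $\tilde J_{4,k}$ is invertible with uniformly bounded inverse for $k$ large. Solving \eqref{eq:QN} for the step then yields
\[
\left\|\begin{bmatrix}\Delta x_k\\\Delta\mu_k\end{bmatrix}\right\|
=\|\tilde J_{4,k}^{-1} F_k\|\le C\|F_k\|
\]
for some constant $C$. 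Combined with the bilinear form of $r_k$ this gives $\|r_k\|\le C'|\Delta\mu_k|\,\|\Delta x_k\|=\mathcal{O}(\|F_k\|^2)$, and because the normalization \eqref{eq:normalized} makes the second component of $F_k$ vanish for $k\ge 1$, we have $\|F_k\|=\|M(\mu_k)x_k\|$, so \eqref{eq:rkQN4} holds. Applying \cite[Theorem~3.3]{Dembo:1982:INEXACT} then delivers local quadratic convergence.

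The main obstacle I expect is the justification that $\tilde J_{4,k}$ is invertible with bounded inverse: this relies on $x_{k+1}$ (which appears in the $(1,2)$ block) being close to $v$, yet $x_{k+1}$ is exactly the quantity we are solving for. The cleanest resolution is to invoke the standing hypothesis that $(\mu_k,x_k)\to(\lambda,v)$, which a fortiori gives $x_{k+1}\to v$ and hence $\tilde J_{4,k}\to J_*$; invertibility of $J_*$ under the simplicity assumption is a standard computation (apply the Schur complement, using that $M(\lambda)$ has a one-dimensional kernel spanned by $v$, $c^Hv\neq 0$, and that $M'(\lambda)v$ is not in the range of $M(\lambda)$ by simplicity). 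Once this local invertibility is in hand, the rest of the argument is the short chain of inequalities above.
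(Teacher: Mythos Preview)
Your argument is correct and follows the same overall framework as the paper: apply \cite[Theorem~3.3]{Dembo:1982:INEXACT} after showing $\|r_k\|=\mathcal{O}(\|F_k\|^2)$ via the bilinear structure of $r_k$ in the step $(\Delta x_k,\Delta\mu_k)$. The difference lies only in how the step is controlled. You bound it directly by $\|\tilde J_{4,k}^{-1}\|\,\|F_k\|$, using the standing convergence assumption to get $\tilde J_{4,k}\to J_*$ and hence uniform invertibility. The paper instead inserts the limit point, bounds $|\Delta\mu_k|$ and $\|\Delta x_k\|$ by the sum of the errors at steps $k$ and $k{+}1$, invokes eventual monotone decrease of the error to reduce to the step-$k$ error alone, and finally uses the implicit-function expansion \eqref{eq:errexp} to pass from the error back to $\|M(\mu_k)x_k\|$. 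Your route is shorter and sidesteps both the monotonicity claim (which does not literally follow from convergence alone) and the detour through \eqref{eq:errexp}; the paper's route, in exchange, never needs to argue invertibility of the implicit matrix $\tilde J_{4,k}$. As a minor remark, you announce \eqref{eq:errexp} in your plan but your executed argument does not actually use it.
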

\begin{proof}
  From \eqref{eq:rkQN4} and properties of the two-norm we have
\begin{multline*}
\|r_k\|\le |\mu_{k+1}-\mu_k|\|M'(\mu_k)\|\|x_{k+1}-x_k\|
\le\\
\left(\left\|\begin{bmatrix}
  x_{k+1}-v\\
  \mu_{k+1}-\lambda
\end{bmatrix}+
\begin{bmatrix}
  x_{k}-v\\
  \mu_k-\lambda
\end{bmatrix}\right\|\right)
\|M'(\mu_k)\|
\left(\left\|\begin{bmatrix}
  x_{k+1}-v\\
  \mu_{k+1}-\lambda
\end{bmatrix}+
\begin{bmatrix}
  x_{k}-v\\
  \mu_k-\lambda
\end{bmatrix}\right\|\right)\le\\
4\left\|\begin{bmatrix}
  x_{k}-v\\
  \mu_k-\lambda
\end{bmatrix}\right\|^2
\|M'(\mu_k)\|.
\end{multline*}
In the last step we used that, due
to the assumption that $(\mu_1,x_1),(\mu_2,x_2),\ldots$
converges to an eigenpair, we have
\[
\left\|
\begin{bmatrix}
x_{k+1}-v\\
\mu_{k+1}-\lambda
\end{bmatrix}
\right\|\le 
\left\|\begin{bmatrix}x_{k}-v\\\mu_{k}-\lambda\end{bmatrix}\right\|
\]
  for sufficiently large $k$.
By using the expansion \eqref{eq:errexp}
we find directly that $\|r_k\|\le\mathcal{O}(\|M(\mu_k)x_k\|^2)$,
which by \cite[Theorem~3.3]{Dembo:1982:INEXACT} implies quadratic convergence.
\end{proof}






\section{Interpretation of convergence factors}\label{sec:interpretations}

In order to provide further insight to the linearly convergent \ak{quasi-Newton methods}, 
we now make characterizations of the matrix $A$. To this end,
we use results associated
with (what is commonly referred to as) Keldysh's theorem; 
see the general formulation in \cite{mennicken2003non},
 and the more recent descriptions in the \ak{context of NEPs, e.g.} \cite{Beyn:2011:INTEGRAL,Szyld:2012:LOCAL}.
For simple eigenvalues, Keldysh's theorem implies that there exists
a function $R_1(\sigma)$, analytic in a neighborhood of the eigenvalue,
such that 
\begin{equation} \label{eq:Keldysh1}
M(\sigma)^{-1} = \frac{1}{\sigma-\lambda_1} \frac{v_1 u_1^H}{u_1^H M'(\lambda_1)v_1}+R_1(\sigma) 
\quad \textrm{for all} \quad \sigma \in \Omega \setminus \{\lambda_1\}.
\end{equation}
where $(\lambda_1,v_1,u_1)$ is the eigentriplet of a simple eigenvalue.

We first observe that the convergence factor of QN2 and QN3 can be directly analyzed
with Keldysh's theorem, since
\gm{
\begin{subequations}\label{eq:rhoA2_keldysh}
\begin{eqnarray}
\rho(A_2)&\le&\mynorm{(I - v_1 c^H) M(\sigma)^{-1} \left[ M(\sigma) - M(\lambda_1) + \frac{M'(\lambda_1) v_1w_\sigma^HM(\lambda_1)}{w_\sigma^H M'(\lambda_1) v_1}  \right] }\\
&\le&|\lambda_1-\sigma|\|(I - v_1 c^H)R_1(\sigma)\left(M'(\lambda_1)+M'(\lambda_1)v_1c^HR_1(\lambda_1)M(\lambda_1)\right)\|\\
&&\phantom{xxxxxxxxxxxxxxxxxxxxxxxxxxxxxxxxxx}+\mathcal{O}(|\lambda_1-\sigma|^2).
\end{eqnarray}
\end{subequations}
}
In the last inequality we used that
\begin{align*}
\lim_{\sigma \rightarrow \lambda_1}
\frac{1}{\sigma-\lambda_1}
\frac{M'(\lambda_1) v_1}{w_\sigma^H M'(\lambda_1) v} w_\sigma^H M(\lambda_1)
= 
M'(\lambda_1) v_1 c^H 
R_1(\lambda_1)
M(\lambda_1).
\end{align*}
The relationship  \eqref{eq:rhoA2_keldysh}  indicates
that the convergence
factor depends linearly on the shift eigenvalue distance, and
linearly in $M'(\lambda_1)$.

\begin{remark}[Double non-semisimple eigenvalues]
In case the eigenvalue $\lambda_1$ is a double non-semisimple eigenvalue,
there exist so called generalized eigenvectors $\widetilde{v}_1$ and $\widetilde{u}_1$ such that
\begin{subequations}
\begin{eqnarray}
  M'(\lambda_1) v + M(\lambda_1) \widetilde{v}&=&0   \\
  u^*M'(\lambda_1)  + \widetilde{u}^*M(\lambda_1)  &=&0.
\end{eqnarray}
\end{subequations}
where $u$ and $v$ are the left and right eigenvectors corresponding to $\lambda_1$.
According to 
\gm{ \cite[Theorem~2.6]{Beyn:2011:INTEGRAL} } 
with $L=1$ and $m_1=2$, 
there is a neighborhood $U$ of $\lambda_1$
where we have the expansion
$$
M(\sigma)^{-1} = \frac{1}{ u^* M'(\lambda_1) \widetilde{v} + \tfrac{1}{2} u^* M''(\lambda_1) v}    
\left[\frac{1}{\sigma - \lambda_1} \left(  v \widetilde{u}^* + \widetilde{v} u^* \right) 
+ \frac{1}{(\sigma - \lambda_1)^2} v u^* \right] + R(\sigma),
$$
where $R(\sigma)$ is analytic in $U$.
Then, instead of \eqref{eq:A2} the iteration matrix $A_2$ is of the form
\begin{equation*}
\begin{aligned}
  A_2 = & (I-vc^T) \left( \frac{1}{\sigma - \lambda_1}\frac{\widetilde{v} u^T} {u^* M'(\lambda_1) \widetilde{v} + \tfrac{1}{2} u^* M''(\lambda_1) v}
  + R(\sigma) \right) \left[ M(\sigma) - M(\lambda) \right] \\
&  + \OOO(|\lambda-\sigma|^2),
\end{aligned}
\end{equation*}
where $R(\sigma)$ contains the contribution from all the eigenvalues other than $\lambda_1$. Thus the
iteration matrix $A - vc^T$ contains the factor $\frac{1}{\sigma - \lambda_1}$ and
unlike in the case of a simple eigenvalue, the convergence factor is not
asymptotically proportional to $|\lambda_1-\sigma|$.
\end{remark}

\subsection{Eigenvalue clustering and condition number}

The convergence factor bound \eqref{eq:rhoA2_keldysh}
provides insight on how the convergence depends on the shift 
if the shift-eigenvalue distance is small (consistent
with what was pointed out in \cite{Jarlebring:2011:RESINVCONV}).  We
now show that different insight can be provided by using a
more general form of Keldysh's theorem. 
\gm{This applies}
to the
situation when the shift-eigenvalue distance is not necessarily small.

Inverse iteration for linear eigenvalue problems (with normalization $c^Hv=1$)
has the following property for diagonalizable matrices.
The convergence factor for the eigentriplet $(\lambda_1,v_1,u_1)$
can be bounded in terms of reciprocal eigenvalue-shift distances
weighted with the condition number
\begin{equation}\label{eq:rho_linear}
\rho\left((I-v_1c^H)\sum_{i=2}^n\frac{\sigma-\lambda_1}{\sigma-\lambda_i}\frac{v_iu_i^H}{u_i^Hv_i}\right)\le
\|P_1\||\sigma-\lambda_1|\sum_{i=2}^n\frac{1}{|\sigma-\lambda_i|}\kappa_i
\end{equation}
where $\kappa_i=\|u_i\|\|v_i\|/|u_i^Hv_i|$ is the eigenvalue condition number (following
the standard definition \cite{VanLoan:1987:ESTIMATING})
and $P_1$ is the projector $P_1=I-v_1c^H$.

In order to generalize this property, we use a more general form of Keldysh's theorem.
We let $\Gamma\subset\Omega$ be a simple, closed, piecewise-smooth curve and
denote the eigenvalues in its interior by $\lambda_1,\cdots, \lambda_k\in \operatorname{int}(\Gamma)$. Then, Keldysh's theorem states that 
\begin{equation} \label{eq:Keldysh}
M(\sigma)^{-1} = \sum_{i=1}^k\frac{1}{\sigma-\lambda_i}\frac{v_i u_i^H}{u_i^H M'(\lambda_i)v_i}+R_\Gamma(\sigma) 
\quad \textrm{for all} \quad \sigma \in \Omega\setminus\{\lambda_1,\ldots, \lambda_k \},
\end{equation}
where \ak{$R_\Gamma$ is analytic in $\operatorname{int}(\Gamma)$.}
The following result provides an analogue of the  eigenvalue clustering
property \eqref{eq:rho_linear},
\gm{under the assumption that $R_\Gamma(\sigma)$ is small.}

\begin{corollary}[Eigenvalue clustering] \label{Cor:clustering}
\gm{
Suppose that $\Omega$ is a  closed simply connected domain with boundary $\Gamma$. 
Suppose that $M$ is analytic
  in this domain and 
  that all the eigenvalues are simple.
  Denote the corresponding eigentriplets by  $(\lambda_1,v_1,u_1)$,$\dots$, \\$(\lambda_k,v_k,u_k)$,}
  with normalization $c^Hv_1=\cdots=c^Hv_k=1$.  Then, the convergence
  factor for QN2 and QN3 are bounded by
\gm{
\begin{multline*}
  \rho(A_2)=\rho(A_3)\le \\
  \|P_1\|
  \mynorm{M(\lambda_1)-M(\sigma)+\frac{1}{w_\sigma^H M'(\lambda_1) v_1} M'(\lambda_1) vw^HM(\lambda_1)}\left(\sum_{i=2}^k \frac{1}{|\sigma-\lambda_i|}\kappa_i
  + \|R_\Gamma(\sigma) \| \right)
\end{multline*}
}
where $\kappa_i$ is the eigenvalue condition number for NEPs,
\[ 
\kappa_i:=\frac{\|u_i\|\|v_i\|}{|u_i^HM'(\lambda_i)v_i|}
\]
and $R_\Gamma(\sigma)$ is the remainder-term  in Keldysh's theorem in \eqref{eq:Keldysh}.
\end{corollary}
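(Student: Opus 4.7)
My starting point is Corollary~\ref{cor:QNfactors}, which identifies the non-zero spectrum of $A_2$ (and hence, by Corollary~\ref{cor:QNfactors}, of $A_3$) with the spectrum of the matrix
\[
B=(I-v_1c^H)M(\sigma)^{-1}\left[M(\sigma)-M(\lambda_1)+\frac{1}{w_\sigma^H M'(\lambda_1)v_1}M'(\lambda_1)v_1w_\sigma^HM(\lambda_1)\right].
\]
Since $\rho(B)\le\|B\|$, it suffices to bound $\|B\|$ by the product on the right-hand side of the claimed inequality. The plan is to insert the general Keldysh expansion \eqref{eq:Keldysh} for $M(\sigma)^{-1}$ into this expression and then use submultiplicativity.

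After substituting \eqref{eq:Keldysh}, the factor $(I-v_1c^H)M(\sigma)^{-1}$ becomes
\[
(I-v_1c^H)\sum_{i=1}^{k}\frac{1}{\sigma-\lambda_i}\frac{v_iu_i^H}{u_i^HM'(\lambda_i)v_i}+(I-v_1c^H)R_\Gamma(\sigma).
\]
The key simplification is that the $i=1$ summand vanishes: by the normalization $c^Hv_1=1$ we have $(I-v_1c^H)v_1=v_1-v_1(c^Hv_1)=0$, so the rank-one term along $v_1$ is annihilated by the projector $P_1=I-v_1c^H$. This is the mechanism that removes the singularity at $\sigma=\lambda_1$ and aligns with the linear eigenvalue analogue \eqref{eq:rho_linear}.

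What remains is the sum over $i\ge 2$ plus the analytic remainder $R_\Gamma(\sigma)$, left-multiplied by $P_1$ and right-multiplied by the bracketed factor. Applying the triangle inequality and submultiplicativity to the product $B=P_1\cdot(\text{sum}+R_\Gamma(\sigma))\cdot[\text{bracket}]$, I can pull out $\|P_1\|$ and the bracket norm, leaving
\[
\left\|\sum_{i=2}^{k}\frac{1}{\sigma-\lambda_i}\frac{v_iu_i^H}{u_i^HM'(\lambda_i)v_i}\right\|+\|R_\Gamma(\sigma)\|
\le \sum_{i=2}^{k}\frac{1}{|\sigma-\lambda_i|}\frac{\|v_i\|\,\|u_i\|}{|u_i^HM'(\lambda_i)v_i|}+\|R_\Gamma(\sigma)\|,
\]
where each rank-one term is bounded using $\|v_iu_i^H\|=\|v_i\|\,\|u_i\|$. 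Recognizing the ratio as the NEP eigenvalue condition number $\kappa_i$ finishes the bound.

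The only step that requires care is the cancellation of the $i=1$ Keldysh term; the rest is a sequence of standard norm estimates. I anticipate no serious obstacle, since both ingredients — the projector identity $P_1v_1=0$ and the Keldysh expansion — are given, and the bracket factor is treated as an opaque matrix whose norm is simply carried through.
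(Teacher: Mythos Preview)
Your proposal is correct and matches the paper's approach exactly: the paper's proof is the one-line remark that the result ``follows the steps in \eqref{eq:rhoA2_keldysh} but instead using the form \eqref{eq:Keldysh} and matrix norm inequalities,'' and you have spelled out precisely those steps --- insert the Keldysh expansion into the expression for $B$ from Corollary~\ref{cor:QNfactors}, use $P_1v_1=0$ to kill the $i=1$ term, then apply submultiplicativity and the triangle inequality together with $\|v_iu_i^H\|=\|v_i\|\|u_i\|$ to extract the $\kappa_i$.
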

\begin{proof} The result follows the steps in \eqref{eq:rhoA2_keldysh}
  but instead using the form \eqref{eq:Keldysh} and matrix norm inequalities.
  \end{proof}


\subsection{Characterization of Keldysh's remainder term $R_\Gamma(\sigma)$}
The analysis in the previous subsection indicates that
a dependence on the eigenvalue clustering  similar to the linear
case can be expected  under the condition that $R_\Gamma$ is small. Keldysh's theorem is, in a certain sense,
\ak{a matrix version of the partial fraction expansion of an
analytic function } 
(also known as  Mittag-Leffler's theorem) and
has been characterized in
\cite{gohberg1986interpolation}. See  
\cite{zwart1988partial} for
\ej{a partial fraction expansion characterization for}
delay eigenvalue problems.
A precise \ak{characterization of}
 $R_\Gamma$ and of its norm for the general case 
using \cite{gohberg1986interpolation} 
is somewhat involved.
We take a less ambitious approach and point out
certain situations where it is small or vanishes. 
Although $R_\Gamma$ does not always vanish in the limit when $\Gamma$ encloses $\CC$,
it does vanish \ej{under certain assumptions}.
We characterize
several \gm{of} such situations next, and note that these
results are general for nonlinear eigenvalue problems.
\ak{Therefore they} may be of interest also outside
the scope of quasi-Newton methods.

First we need an explicit formulation of the remainder term $R_\Gamma$.
\begin{lemma} \label{lem:R_rep}
Let $\Gamma\subset\Omega$ be a simple, closed, piecewise-smooth curve and
denote the eigenvalues of \eqref{eq:nep} in the interior of $\Gamma$
by $\lambda_1,\cdots, \lambda_k\in \operatorname{int}(\Gamma)$ and suppose $\lambda_1,\ldots,\lambda_k$ are simple.
 Then, the analytic function
$R_\Gamma(z)$, i.e., the analytic part of $M(z)^{-1}$ in 
$\mathrm{int}(\Gamma)/\{\lambda_1,\ldots, \lambda_k \}$, given by the Keldysh theorem, has the integral representation
\begin{equation} \label{eq:Cauchy_R}
R_\Gamma(z) = \frac{1}{2 \pi \mathrm{i}} \int_\Gamma  \frac{M(\lambda)^{-1}}{\lambda - z} \, \mathrm{d}\lambda.
\end{equation}
\begin{proof}
  We can apply the Cauchy integral formula, since $R_\Gamma$ is analytic,
  and use equation \eqref{eq:Keldysh}
  \begin{eqnarray*}
  R_\Gamma(z) &=&
  \frac{1}{2 \pi \mathrm{i}} \int_\Gamma \frac{R_\Gamma(z)}{\lambda - z}\,\mathrm{d}\lambda
  =   \\
  &=&\frac{1}{2 \pi \mathrm{i}} \left(\int_\Gamma  \frac{M(\lambda)^{-1}}{\lambda - z} \, \mathrm{d}\lambda -
  \sum\limits_{i=1}^k \frac{v_i u_i^H}{u_i^HM'(\lambda_i)v_i}  \int_\Gamma  \frac{1}{(\lambda-\lambda_i)(\lambda - z)} \, \mathrm{d} \lambda\right)
  \end{eqnarray*}
The Cauchy residue theorem implies that
$\int_\Gamma  \frac{1}{(\lambda-\lambda_i)(\lambda - z)} \, \mathrm{d} \lambda=0$,
which proves \eqref{eq:Cauchy_R}.
\end{proof}
\end{lemma}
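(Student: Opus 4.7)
My plan is to derive the integral representation by combining the fact that $R_\Gamma$ is analytic on $\mathrm{int}(\Gamma)$ with the Keldysh decomposition \eqref{eq:Keldysh} and Cauchy's integral formula. The key observation is that $R_\Gamma$ is, by definition in \eqref{eq:Keldysh}, the analytic part of $M(z)^{-1}$ obtained after subtracting off the simple-pole contributions at each eigenvalue $\lambda_1,\dots,\lambda_k$ in $\mathrm{int}(\Gamma)$. Hence $R_\Gamma$ is holomorphic on $\mathrm{int}(\Gamma)$ and continuous up to $\Gamma$, which is exactly the hypothesis needed to apply Cauchy's integral formula.

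First, I would fix $z\in\mathrm{int}(\Gamma)\setminus\{\lambda_1,\dots,\lambda_k\}$ and write Cauchy's formula for the analytic function $R_\Gamma$:
\[
R_\Gamma(z) \;=\; \frac{1}{2\pi\mathrm{i}}\int_\Gamma \frac{R_\Gamma(\lambda)}{\lambda-z}\,\mathrm{d}\lambda.
\]
Then I would substitute the identity
\[
R_\Gamma(\lambda) \;=\; M(\lambda)^{-1} - \sum_{i=1}^{k}\frac{v_iu_i^H}{u_i^HM'(\lambda_i)v_i}\frac{1}{\lambda-\lambda_i},
\]
which is just \eqref{eq:Keldysh} rearranged, and split the integral into the $M(\lambda)^{-1}$ piece and the sum of pole contributions.

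The only thing that needs to be checked is that each of the auxiliary integrals vanishes,
\[
\int_\Gamma \frac{1}{(\lambda-\lambda_i)(\lambda-z)}\,\mathrm{d}\lambda \;=\; 0,
\]
for all $i$ with $\lambda_i\neq z$. I would do this by partial fractions,
\[
\frac{1}{(\lambda-\lambda_i)(\lambda-z)} \;=\; \frac{1}{\lambda_i-z}\left(\frac{1}{\lambda-\lambda_i} - \frac{1}{\lambda-z}\right),
\]
and the residue theorem: both $\lambda_i$ and $z$ lie in $\mathrm{int}(\Gamma)$, so each of the two contour integrals equals $2\pi\mathrm{i}$, and their difference is zero. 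After this cancellation, only the $M(\lambda)^{-1}$ term survives, yielding \eqref{eq:Cauchy_R}. Finally, the isolated case $z=\lambda_i$ does not need separate treatment since $R_\Gamma$ is analytic on all of $\mathrm{int}(\Gamma)$, so \eqref{eq:Cauchy_R} extends to every $z\in\mathrm{int}(\Gamma)$ by continuity of both sides in $z$.

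There is no real obstacle here: the result is essentially a bookkeeping exercise in contour integration once one notices that $R_\Gamma$ is holomorphic inside $\Gamma$. The one subtlety to be careful about is ensuring the hypothesis of Cauchy's formula applies, i.e., that $\Gamma$ avoids all eigenvalues (guaranteed since $\lambda_1,\dots,\lambda_k\in\mathrm{int}(\Gamma)$ and simplicity of $\Gamma$ gives a positive distance to each $\lambda_i$), so that $M(\lambda)^{-1}$ and hence $R_\Gamma$ are well-defined on $\Gamma$.
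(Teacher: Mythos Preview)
Your proof is correct and follows essentially the same route as the paper: apply Cauchy's integral formula to the analytic function $R_\Gamma$, substitute the Keldysh decomposition \eqref{eq:Keldysh}, and observe that the residue theorem kills each term $\int_\Gamma \frac{1}{(\lambda-\lambda_i)(\lambda-z)}\,\mathrm{d}\lambda$. You simply spell out the partial-fraction justification and the continuity extension to $z=\lambda_i$ in more detail than the paper does.
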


This leads directly to a sufficient condition for vanishing $R_\Gamma$,
involving  $M(\lambda)^{-1}$ in the limit $\lambda\rightarrow\infty$.
\begin{lemma} \label{lem:M_inv_series}
  Suppose $M(\lambda)$ is analytic in $\CC$ and suppose all eigenvalues are simple.
  Moreover, suppose
\begin{equation} \label{eq:M_inv_limit}
\norm{M(\lambda)^{-1}} \rightarrow 0, \quad \textrm{as} \quad |\lambda| \rightarrow \infty.
\end {equation}
Then, the set of eigenvalues is finite and
 \begin{equation} \label{eq:M_inv_series}
 M(\lambda)^{-1} = \sum\limits_{i=1}^k \frac{1}{\lambda-\lambda_i}\frac{ v_i u_i^H }{u_i^HM'(\lambda_i)v_i}
 \end{equation}
 for all $\lambda \in \mathbb{C} \setminus \{\lambda_1,\lambda_2,\ldots, \lambda_k  \}$, where $\lambda_i$, $1\leq i\leq k$, denote the eigenvalues of $M$.
\begin{proof}
  Since \eqref{eq:M_inv_limit} implies that for every $\varepsilon>0$
  there exists an $R$ such that
  $\sup_{|z|>R}\|M(z)^{-1}\|<\varepsilon$, we have in particular that there exists
  $r$ such that $M(z)^{-1}$ has no poles outside a disk of radius $r$.
  The eigenvalues of \eqref{eq:nep} are roots of the analytic function $\det(M(\lambda))$.
  An analytic function only has a finite number of roots in a compact
  subset of the complex plane, and we therefore only have a finite number of eigenvalues
  in the disk of radius $r$. 
Take $\Gamma$ to be a circle of radius $r$. 
Then, using the representation given by Lemma~\ref{lem:R_rep}, we get the bound
\begin{equation} \label{eq:R_ineq}
\norm{R_\Gamma (z)} \leq  \frac{1}{2 \pi}  \int_\Gamma  
\frac{\norm{M(\lambda)^{-1}}}{\abs{\lambda - z}} \, \mathrm{d}\lambda
\leq   \max_{z \in \Gamma} \norm{M(z)^{-1}}.
\end{equation}
Letting $r \rightarrow \infty$, the claim follows.
 \end{proof}
\end{lemma}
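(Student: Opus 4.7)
The plan is to combine the integral representation of the Keldysh remainder from Lemma~\ref{lem:R_rep} with the decay hypothesis \eqref{eq:M_inv_limit} by taking the contour $\Gamma$ out to infinity. The series expansion \eqref{eq:M_inv_series} should then follow because $R_\Gamma$ vanishes in that limit while the pole part stabilizes as soon as $\Gamma$ encloses all eigenvalues.

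First I would establish the finiteness of the eigenvalue set. By \eqref{eq:M_inv_limit}, there exists $R>0$ such that $\|M(\lambda)^{-1}\|<1$ for $|\lambda|>R$, so in particular $M(\lambda)$ is nonsingular outside the closed disk $\overline{D}(0,R)$. Since the eigenvalues are zeros of the entire scalar function $\det M(\lambda)$, and since $\det M$ is not identically zero (it is nonzero outside $\overline{D}(0,R)$), these zeros form a discrete set inside the compact disk $\overline{D}(0,R)$ and are therefore finite in number. Call them $\lambda_1,\ldots,\lambda_k$, and choose $r>R$ so that the circle $\Gamma_r=\{|\lambda|=r\}$ encloses all of them.

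Next I would apply Lemma~\ref{lem:R_rep} with this $\Gamma_r$. For any fixed $z$ inside $\Gamma_r$,
\begin{equation*}
R_{\Gamma_r}(z)=\frac{1}{2\pi\mathrm{i}}\int_{\Gamma_r}\frac{M(\lambda)^{-1}}{\lambda-z}\,\mathrm{d}\lambda,
\end{equation*}
and the standard ML-estimate gives
\begin{equation*}
\|R_{\Gamma_r}(z)\|\le \frac{1}{2\pi}\cdot 2\pi r\cdot\frac{\max_{|\lambda|=r}\|M(\lambda)^{-1}\|}{r-|z|}=\frac{r}{r-|z|}\max_{|\lambda|=r}\|M(\lambda)^{-1}\|.
\end{equation*}
By hypothesis \eqref{eq:M_inv_limit}, $\max_{|\lambda|=r}\|M(\lambda)^{-1}\|\to 0$ as $r\to\infty$, and the prefactor $r/(r-|z|)\to 1$, so $R_{\Gamma_r}(z)\to 0$. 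On the other hand, once $r>R$ the set of enclosed eigenvalues is already $\{\lambda_1,\ldots,\lambda_k\}$ and does not change with $r$, so Keldysh's theorem \eqref{eq:Keldysh} gives
\begin{equation*}
M(z)^{-1}=\sum_{i=1}^{k}\frac{1}{z-\lambda_i}\frac{v_iu_i^H}{u_i^HM'(\lambda_i)v_i}+R_{\Gamma_r}(z).
\end{equation*}
Letting $r\to\infty$ with $z$ fixed yields \eqref{eq:M_inv_series} for every $z\in\mathbb{C}\setminus\{\lambda_1,\ldots,\lambda_k\}$.

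The main obstacle is the interplay between the growing contour and the fixed point $z$: one must justify that the bound on the integrand is uniform enough that the ML-estimate actually collapses. This is handled cleanly because $|z|$ is fixed while $r\to\infty$, so the distance $|\lambda-z|\ge r-|z|$ grows linearly in $r$, and the decay of $\|M(\lambda)^{-1}\|$ along $\Gamma_r$ supplied by \eqref{eq:M_inv_limit} is what closes the argument. A secondary subtlety is that Keldysh's theorem as quoted in \eqref{eq:Keldysh} is stated for a fixed $\Gamma$, so one should note that increasing $r$ does not add new eigenvalues to the sum, which is exactly what the finiteness step above ensures.
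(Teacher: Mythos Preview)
Your proposal is correct and follows essentially the same route as the paper: establish finiteness of the eigenvalue set via the zeros of the entire function $\det M$, then use the integral representation of Lemma~\ref{lem:R_rep} on circles of growing radius and let $r\to\infty$. Your ML-estimate is in fact slightly more careful than the paper's, which writes the final bound as $\max_{z\in\Gamma}\|M(z)^{-1}\|$ without tracking the factor $r/(r-|z|)$; your version makes explicit why that factor is harmless in the limit.
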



\begin{remark}[Generalizations to higher order multiplicities]
  Lemma~\ref{lem:R_rep} and Lemma~\ref{lem:M_inv_series} have the
  assumption that the eigenvalues are  simple. A generalization
  to higher algebraic and/or geometric multiplicities seems feasible but
  more involved.
  Then, the expression \eqref{eq:Cauchy_R} follows from
  \cite[Corollary~2.8]{Beyn:2011:INTEGRAL}, and the fact that when the contour $\Gamma$ encircles
 $\lambda_i$ and $z$, and $\ell \geq 1$, it holds that
 $$
 \frac{1}{2 \pi \ii} \int_\Gamma \frac{1}{(\lambda - \lambda_i)^\ell (\lambda - z)} \, \dd \lambda = 0
 $$
 by the residue theorem. Then the condition $\norm{M(\lambda)^{-1}} \rightarrow 0$ as $|\lambda| \rightarrow \infty$,
 and the inequality \eqref{eq:R_ineq} imply that $R_\Gamma(z) = 0$. 
\end{remark}

From the above lemmas we conclude the following result which states
that the $R_\Gamma$ vanishes if the
NEP is the sum of a polynomial with leading non-singular coefficient and
a term which decays sufficiently fast.



\begin{theorem} \label{lem:ABF2}
  Suppose $M$ is analytic in $\CC$ and suppose all 
  eigenvalues are simple. Moreover,
  suppose $M(\lambda)$ is of the form
  $$
M(\lambda) = P(\lambda) + F(\lambda),
$$
where 
\begin{enumerate}
\item $P(\lambda) =  \sum_{i=0}^N A_i \lambda^i$ for some matrices $A_0,\ldots,A_N \in \mathbb{C}^{n \times n}$
such that that $A_N$ is non-singular, and 
\item 
$\lambda^{-N} F(\lambda) \rightarrow 0$ as $\abs{\lambda} \rightarrow \infty$.
\end{enumerate}
Then, $R_\Gamma(\lambda)=0$ and the representation \eqref{eq:M_inv_series} holds for $M(\lambda)^{-1}$.

\begin{proof}
We see that
\begin{equation} \label{eq:M_inv}
M(\lambda)^{-1} =  \Big( \sum\limits_{\ell = 0}^N \lambda^\ell A_\ell + F(\lambda) \Big)^{-1} 
= \frac{ A_N^{-1} }{ \lambda^N } \Big( I +  B(\lambda) \Big)^{-1},
\end{equation}
where $B(\lambda) = \big( \sum\limits_{\ell = 0}^{N-1} \lambda^{\ell-N} A_\ell  + \lambda^{-N} F(\lambda) \big) A_N^{-1}$.
By the assumption 2 above, it holds that
\gm{
\begin{equation} \label{eq:B_vanish}
\begin{aligned}
 \norm{B(\lambda)} & =  \,\,\,  \mynorm{  \big(  \sum\limits_{\ell = 0}^{N-1} \lambda^{\ell - N} A_\ell  + \lambda^{-N} F(\lambda) \big) A_N^{-1}}  \\
 \leq  & \,\,\,  \norm{A_N^{-1}}  \Big( \sum\limits_{\ell = 0}^{N-1}  \abs{\lambda}^{\ell - N} \norm{A_\ell}    +  \norm{ \lambda^{-N} F(\lambda) } \Big)  
\rightarrow 0, \quad \textrm{as}
\quad \abs{\lambda} \rightarrow \infty.
\end{aligned}
\end{equation}
}
Using the bound $\norm{(I - A)^{-1}} \leq (1- \norm{A})^{-1}$ for $\norm{A}<1$ (see~\cite[pp.\;351]{Horn:2012:MATAN}),
and choosing $\lambda$ such that $\norm{B(\lambda)} < 1$, we see from \eqref{eq:M_inv} that
$$
\norm{M(\lambda)^{-1}} \leq \frac{\norm{A_N^{-1}}}{\abs{\lambda}^N}   \frac{1}{1 - \norm{ B(\lambda)} }.
$$ 
and therefore $\norm{M(\lambda)^{-1}}  \rightarrow 0, \quad \textrm{as} \quad \abs{\lambda} \rightarrow \infty$.
\end{proof}
\end{theorem}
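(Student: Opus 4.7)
The plan is to reduce the statement to Lemma~\ref{lem:M_inv_series} by verifying its hypothesis, namely $\|M(\lambda)^{-1}\|\to 0$ as $|\lambda|\to\infty$. Once this decay is in hand, Lemma~\ref{lem:M_inv_series} immediately gives both the finiteness of the spectrum and the partial fraction expansion \eqref{eq:M_inv_series}, which is equivalent to $R_\Gamma\equiv 0$ (since the expansion in Keldysh's theorem is unique).

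First I would factor the leading behaviour out of $M(\lambda)$. Writing
\[
M(\lambda) = \lambda^{N} A_N\bigl(I + B(\lambda)\bigr),
\qquad
B(\lambda):=\Bigl(\sum_{\ell=0}^{N-1}\lambda^{\ell-N}A_\ell + \lambda^{-N}F(\lambda)\Bigr)A_N^{-1},
\]
uses only that $A_N$ is invertible. Then each summand in $B(\lambda)$ vanishes as $|\lambda|\to\infty$: the polynomial terms because $\ell-N<0$, and the $F$-term by assumption~2. Hence $\|B(\lambda)\|\to 0$.

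Next I would choose $R$ large enough that $\|B(\lambda)\|\le 1/2$ whenever $|\lambda|\ge R$. On this region $I+B(\lambda)$ is invertible by the Neumann series, with $\|(I+B(\lambda))^{-1}\|\le (1-\|B(\lambda)\|)^{-1}\le 2$ (see~\cite[pp.\;351]{Horn:2012:MATAN}). Combining with the factorisation yields
\[
\|M(\lambda)^{-1}\| \le \frac{\|A_N^{-1}\|}{|\lambda|^{N}}\,\frac{1}{1-\|B(\lambda)\|}\longrightarrow 0\quad\text{as }|\lambda|\to\infty,
\]
which is precisely \eqref{eq:M_inv_limit}.

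Finally, applying Lemma~\ref{lem:M_inv_series} gives the partial fraction representation \eqref{eq:M_inv_series} on $\mathbb{C}\setminus\{\lambda_1,\ldots,\lambda_k\}$. Comparing this with Keldysh's formula \eqref{eq:Keldysh} on any contour $\Gamma$ that encloses all eigenvalues shows that $R_\Gamma(\lambda)\equiv 0$, which is what we wanted to prove. There is no genuine obstacle here; the only delicate point is ensuring the Neumann bound is applied on a large enough disk, which is handled by the uniform decay of each term in $B(\lambda)$ guaranteed by assumption~2.
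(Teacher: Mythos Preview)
Your approach is essentially identical to the paper's: factor out the leading term $\lambda^N A_N$, show the remainder $B(\lambda)$ vanishes at infinity, apply the Neumann bound to get $\|M(\lambda)^{-1}\|\to 0$, and then invoke Lemma~\ref{lem:M_inv_series}. One small slip: with your $B(\lambda)=(\sum_{\ell=0}^{N-1}\lambda^{\ell-N}A_\ell+\lambda^{-N}F(\lambda))A_N^{-1}$ the correct factorisation is $M(\lambda)=(I+B(\lambda))\,\lambda^N A_N$, not $\lambda^N A_N(I+B(\lambda))$ (the matrices need not commute); this does not affect the norm estimate or the conclusion.
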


\medskip



\gm{In addition to the polynomial eigenvalue problem with invertible leading coefficient matrix 
(eventually perturbed in the sense of Theorem~\ref{lem:ABF2})
the conditions of 
Lemma~\ref{lem:M_inv_series}
(and the representation \eqref{eq:M_inv_series}, subsequently) 
hold for several problems in the literature.
For instance the following class of rational eigenvalue problems \ej{are} often encountered in practice \cite{Betcke:2013:NLEVPCOLL}.}
Let $A$, $B$ and $C_i$, $1\leq i \leq k$, be square matrices such that $B$ is invertible,
and let
$$
M(\lambda) = A + \lambda B + \sum\limits_{i=1}^{k} \frac{\lambda}{\sigma_i - \lambda} C_i,
$$
where $\sigma_i$ are given poles. 

\gm{These results hold also }
in the context of \gm{certain} modifications of the
symmetric eigenvalue problem~\cite{Huang:2010:RANKONE} and ~\cite{solov}. 
Let $A,B \in \mathbb{C}^{n \times n}$ \gm{be} such that $B$ is invertible, 
\gm{
and let $s:\mathbb{C} \rightarrow \mathbb{C}$ be a function such that 
}
$s(\lambda) \rightarrow C$, $C$ constant, as $\abs{\lambda} \rightarrow \infty$. Let
$$
M(\lambda) = A - \lambda B + s(\lambda) u u^H.
$$
The condition on the limit behaviour of $s(\lambda)$ holds in particular
for \cite[Example~2]{Huang:2010:RANKONE}.



\begin{remark}[A counterexample]
Although the above theory shows that $R_\Gamma$ does vanish in many situations similar
to the linear case, it is not always zero, as can be seen from the example 
\begin{equation*}
 \begin{aligned}
M(\lambda)
= \begin{bmatrix}
  \lambda - 1 &  (\lambda - 1)(\lambda - 2) f(\lambda) \\
  0 & \lambda - 2
\end{bmatrix}
 \end{aligned}
\end{equation*}
where $f$ is a scalar analytic function. 
Then,
$$
M(\lambda)^{-1} = \begin{bmatrix} (\lambda - 1)^{-1} & f(\lambda) \\ 0 & (\lambda - 2)^{-1} \end{bmatrix}, 
$$
and when we select $\Gamma$ such that encloses the two eigenvalues $\lambda_1=1$ and $\lambda_2=2$, we
have
\begin{equation*}
 \begin{aligned}
 R_\Gamma(z) = \frac{1}{2 \pi \mathrm{i}} \int_\Gamma \frac{M(\lambda)^{-1}}{\lambda - z} \, \mathrm{d} \lambda = 
  \begin{bmatrix} 0 & f(z) \\ 0 & 0 \end{bmatrix} \quad \textrm{ for all } \, z\in\mathbb{C} \backslash \{1,2\}.
 \end{aligned}
\end{equation*}
\end{remark}

\section{Numerical simulations}\label{sec:numerics}

\subsection{Rational eigenvalue problem}
\ej{We illustrate several properties of the results with numerical simulations}\footnote{The simulations are publicly available online: {\tt http://www.math.kth.se/\~{}eliasj/src/qnewton/}}.
We consider the problem 'loaded string' from the NLEVP collection~\cite{Betcke:2013:NLEVPCOLL}.
The problem is of the form
\begin{equation} \label{eq:loaded_string}
M(\lambda) = A - \lambda B + \frac{\lambda}{\lambda - \sigma} C,
\end{equation}
where $A,B,C \in \mathbb{R}^{n \times n}$, and $B$ is invertible. We set $n=20$.
All the eigenvalues are real positive,
and to make the spectrum more clustered in the left end, we multiply the original coeffient matrix $C$ by $n$.
%
The spectrum of the problem is shown in Figure~\ref{fig:spectrum}.

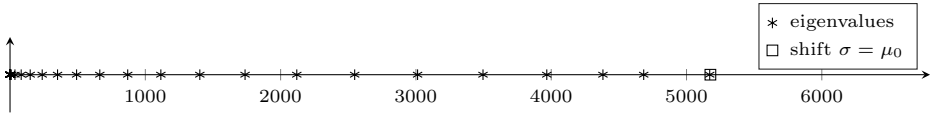
\begin{figure}[h!] 
\begin{center}
\setlength\figureheight{1cm} 
\setlength\figurewidth{10cm}
%
%
\definecolor{mycolor1}{rgb}{1.00000,0.00000,1.00000}%
\begin{tikzpicture}
    \pgfkeys{%
    /pgf/number format/set thousands separator = {}}
\begin{axis}[%
width=1.21\figurewidth,
height=\figureheight,
at={(0\figurewidth,0\figureheight)},
scale only axis,
axis lines=center,
xmin=0,
xmax=6800,
ymin=-1,
ymax=1,
ytick={-100},
axis background/.style={fill=white},
legend style={at={(0.9,1.45)},anchor=north,legend cell align=left,align=left,draw=white!15!black}
]
\addplot [color=black,only marks,mark=\eigplot,mark options={solid}]
  table[row sep=crcr]{%
   5171.410019927617  0\\
   4683.572435686605  0\\
   4382.207980959722  0\\
   3967.197608928568  0\\
   3495.765505391225  0\\
   3012.238497451771  0\\
   2547.367167852586  0\\
   2119.440133202183  0\\
   1737.144097228624  0\\
   1402.746343524835  0\\
   1114.738348177418  0\\
   0869.704917785418  0\\
   0663.507741037136  0\\
   0491.968322581102  0\\
   0351.223776637625  0\\
   0237.885559093553  0\\
   0149.089272126339  0\\
   0082.493155751147  0\\
   0036.263197885961  0\\
   0009.068420939721  0\\
   0001.000000000000  0\\
   0001.000000000000  0\\
   0001.000000000000  0\\
   0001.000000000000  0\\
   0001.000000000000  0\\
   0001.000000000000  0\\
   0001.000000000000  0\\
   0001.000000000000  0\\
   0001.000000000000  0\\
   0001.000000000000  0\\
   0001.000000000000  0\\
   0001.000000000000  0\\
   0001.000000000000  0\\
   0001.000000000000  0\\
   0001.000000000000  0\\
   0001.000000000000  0\\
   0001.000000000000  0\\
   0001.000000000000  0\\
   0001.000000000000  0\\
   0000.046907192055  0\\
};
\addlegendentry{~eigenvalues};
\addplot [color=black,only marks,mark=\shiftplot,mark options={solid}]
  table[row sep=crcr]{%
   5176.410019927617  0\\
};
\addlegendentry{~shift $\sigma=\mu_0$};
\end{axis}
\end{tikzpicture}%
\end{center}
\caption{Spectrum of the rational eigenvalue problem \eqref{eq:loaded_string}.}
\label{fig:spectrum}
\end{figure}

We first place the initial value \gm{$(\mu_0, x_0)$} close to the right-most eigenvalue $\lambda \approx 5170$,
such that $\mu_0 = \lambda + 5.0$ and 
$x_0 = v + a \cdot [ 1 \, \ldots \, 1 ]^T$,
where $v$ is the exact eigenvector corresponding to $\lambda$ and $a$ is a scalar. We set $c=x_0$ and $\sigma = \mu_0$. 
\ej{Since we are here mainly concerned with convergence properties,
  we use computation with high precision arithmetic with a sufficiently high precision, such that round-off errors are not influencing the figures.}
As shown in Figure~\ref{fig:conv1}, the performance of the 
\gm{QN1}
is found to be sensitive to
the distance of $x_0$ from $v$. As expected from Corollary~\ref{cor:QNfactors}, the convergence curves of the Quasi-Newton 2 and the residual
inverse iteration are very close to each other. Figure~\ref{fig:conv_factor} shows the numerically estimated convergence factors
and also the a priori convergence factor estimates.
For the 
\gm{QN1}
the a priori converge factor equals $\rho(A_1)$, i.e., the spectral radius of the matrix $A_1$ given in
Theorem~\ref{thm:convQN1}. For the 
\gm{QN2}
and 
QN3, i.e., the residual inverse iteration, it equals
$\rho(B)$, where $B$ is given in 
\gm{
Corollary~\ref{cor:QNfactors}.}
The estimated convergence factor $\rho_k$ at iteration $k$ is computed by 
$$
\rho_k = \frac{\norm{w_{k} - w_*}}{\norm{w_{k-1} - w_*}},
$$
where $w_k= [ \begin{smallmatrix} v_k \\ \mu_k \end{smallmatrix} ]$ and $w_*$
 denotes the exact solution $w_* =  [ \begin{smallmatrix} v \\ \lambda \end{smallmatrix} ]$.

\begin{figure}[!h]
\centering
\setlength\figureheight{4cm} 
\setlength\figurewidth{4.5cm}
\subfigure[$a=0.15$]{
%
%
\begin{tikzpicture}
\begin{axis}[%
width=0.951\figurewidth,
height=\figureheight,
at={(0\figurewidth,0\figureheight)},
scale only axis,
unbounded coords=jump,
xmin=0,
xmax=30,
xlabel={Iteration $k$},
ymode=log,
ymin=1e-35,
ytick={1e2,1e-2,1e-6,1e-10,1e-14,1e-18,1e-22,1e-26,1e-30,1e-34},
xtick={10,20,30,40,50,60},
ymax=100,
yminorticks=true,
ylabel={Error in $(\mu_k,x_k)$},
axis background/.style={fill=white},
legend style={legend cell align=left,align=left,draw=white!15!black,at={(.95,0.5),anchor=north east}},
mark repeat={2},
grid
]
  \addlegendentry{QN1};
\addplot [color=\QNoneplotcolor,solid,mark=\QNoneplot,mark options={solid}]
  table[row sep=crcr]{%
0	5.02533040961776e+000\\
1	2.18799428596369e+000\\
2	952.892515293142e-003\\
3	415.864249252199e-003\\
4	181.327235871638e-003\\
5	79.0947082133138e-003\\
6	34.4950211023432e-003\\
7	15.0452113066407e-003\\
8	6.56184283783741e-003\\
9	2.86193399514096e-003\\
10	1.24821866882444e-003\\
11	544.406026587571e-006\\
12	237.440422077197e-006\\
13	103.558705580607e-006\\
14	45.1667115884239e-006\\
15	19.6992809687481e-006\\
16	8.59176238995719e-006\\
17	3.74726277953653e-006\\
18	1.63435365016585e-006\\
19	712.816800033591e-009\\
20	310.892192496678e-009\\
21	135.594384723822e-009\\
22	59.1389478632706e-009\\
23	25.7932152717992e-009\\
24	11.2496075438788e-009\\
25	4.90647127784750e-009\\
26	2.13993779837965e-009\\
27	933.325300737205e-012\\
28	407.066092133144e-012\\
29	177.540245864774e-012\\
30      77.4334672203604e-012\\
  };    
\addlegendentry{QN2};
\addplot [color=\QNtwoplotcolor,solid,mark=\QNtwoplot,mark options={solid}]
  table[row sep=crcr]{%
0	5.02533040961776e+000\\
1	2.18799428596369e+000\\
2	2.21291887084610e-003\\
3	2.53547215667873e-006\\
4	5.08750495987077e-009\\
5	27.1718714184824e-012\\
6	233.666402688403e-015\\
7	2.24365114491007e-015\\
8	22.1967889493465e-018\\
9	222.234225460242e-021\\
10	2.23806425800374e-021\\
11	22.6094471564467e-024\\
12	228.804705846100e-027\\
13	2.31780424502738e-027\\
14	23.4934133695308e-030\\
15	238.217388715435e-033\\
16	2.41622761345286e-033\\
17	24.9961578476751e-036\\
18	903.252268580267e-039\\
19	903.202462708485e-039\\
20	903.202381174903e-039\\
21	903.202375020430e-039\\
22	903.202370806081e-039\\
23	903.202369940992e-039\\
24	903.202423938126e-039\\
25	903.202453488558e-039\\
26	903.202460543918e-039\\
27	903.202451418597e-039\\
28	903.202497465179e-039\\
29	903.202489285076e-039\\
  };
  \addlegendentry{QN3};
\addplot [color=\QNthreeplotcolor,solid,mark=\QNthreeplot,mark options={solid}]
  table[row sep=crcr]{%
0	5.02533040961776e+000\\
1	2.18799397937703e+000\\
2	2.21289490925449e-003\\
3	2.53548663018780e-006\\
4	5.08756757907643e-009\\
5	27.1722805881554e-012\\
6	233.669713019620e-015\\
7	2.24368081968998e-015\\
8	22.1970693753265e-018\\
9	222.236953661312e-021\\
10	2.23809124798971e-021\\
11	22.6097168209707e-024\\
12	228.807416193181e-027\\
13	2.31783158269552e-027\\
14	23.4936889323836e-030\\
15	238.217393087407e-033\\
16	2.41622766017416e-033\\
17	24.9961448692269e-036\\
18	903.253404714127e-039\\
19	903.202326101039e-039\\
20	903.202302176741e-039\\
21	903.202214112004e-039\\
22	903.202167814322e-039\\
23	903.202435142170e-039\\
24	903.202261795541e-039\\
25	903.202299452936e-039\\
26	903.202205696467e-039\\
27	903.202180159639e-039\\
28	903.202363166335e-039\\
29	903.202451422177e-039\\
  };
\addlegendentry{QN4};
\addplot [color=\QNfourplotcolor,solid,mark=\QNfourplot,mark options={solid},mark repeat={0}]
  table[row sep=crcr]{%
0	5.02533040e+0\\
1	125.091698e-09\\
2	78.4485351e-24\\
3	601.430837e-39\\
  };
\end{axis}
\end{tikzpicture}
\subfigure[$a=0.05$]{
%
%
\begin{tikzpicture}
\begin{axis}[%
width=0.951\figurewidth,
height=\figureheight,
at={(0\figurewidth,0\figureheight)},
scale only axis,
unbounded coords=jump,
xmin=0,
xmax=30,
xlabel={Iteration $k$},
ymode=log,
ymin=1e-35,
ytick={1e2,1e-2,1e-6,1e-10,1e-14,1e-18,1e-22,1e-26,1e-30,1e-34},
xtick={10,20,30,40,50,60},
ymax=100,
yminorticks=true,
ylabel={Error in $(\mu_k,x_k)$},
axis background/.style={fill=white},
legend style={legend cell align=left,align=left,draw=white!15!black,at={(.95,0.95),anchor=north east}},
mark repeat={2},
grid
]
  \addlegendentry{QN1};
\addplot [color=\QNoneplotcolor,solid,mark=\QNoneplot,mark options={solid}]
  table[row sep=crcr]{%
0	5.00437872279468e+000\\
1	289.280666613131e-003\\
2	16.3960541061850e-003\\
3	930.514650128360e-006\\
4	52.8043291999938e-006\\
5	2.99653146486432e-006\\
6	170.046556144241e-009\\
7	9.64976821637714e-009\\
8	547.603122908471e-012\\
9	31.0752729274632e-012\\
10	1.76345339657934e-012\\
11	100.072102002777e-015\\
12	5.67887170624092e-015\\
13	322.263480136873e-018\\
14	18.2877437635628e-018\\
15	1.03778923947483e-018\\
16	58.8922569943047e-021\\
17	3.34200606631685e-021\\
18	189.651494395584e-024\\
19	10.7623052181111e-024\\
20	610.737152253292e-027\\
21	34.6579902328119e-027\\
22	1.96676472468310e-027\\
23	111.609573908255e-030\\
24	6.33359759825184e-030\\
25	359.418655539196e-033\\
26	20.3959045320883e-033\\
27	1.15796613694934e-033\\
28	65.3006590442790e-036\\
29	3.91246857420126e-036\\
30      903.202425794579e-039\\
  };    
\addlegendentry{QN2};
\addplot [color=\QNtwoplotcolor,solid,mark=\QNtwoplot,mark options={solid}]
  table[row sep=crcr]{%
0	5.00437872279468e+000\\
1	289.280666613131e-003\\
2	304.390656183161e-006\\
3	402.614152703304e-009\\
4	1.12658688333590e-009\\
5	7.33735922466981e-012\\
6	65.3139138618395e-015\\
7	629.137434342343e-018\\
8	6.22251042322059e-018\\
9	62.2721285275212e-021\\
10	626.924599120654e-024\\
11	6.33199504133639e-024\\
12	64.0702770066578e-027\\
13	648.979709649039e-030\\
14	6.57774580250721e-030\\
15	66.6940148309210e-033\\
16	677.324677394456e-036\\
17	6.93018733675671e-036\\
18	903.207934573470e-039\\
19	903.202778683726e-039\\
20	601.431700235646e-039\\
21	903.203069516315e-039\\
22	903.202920844091e-039\\
23	601.431897403895e-039\\
24	903.203033680353e-039\\
25	903.202833952494e-039\\
26	903.202825069091e-039\\
27	903.202898535326e-039\\
28	903.202914030257e-039\\
29	601.431879783603e-039\\
  };    
\addlegendentry{QN3};
\addplot [color=\QNthreeplotcolor,solid,mark=\QNthreeplot,mark options={solid}]
  table[row sep=crcr]{%
0	5.00437872279468e+000\\
1	289.280517746789e-003\\
2	304.390295416831e-006\\
3	402.614629482266e-009\\
4	1.12658959370624e-009\\
5	7.33737966389976e-012\\
6	65.3140907930335e-015\\
7	629.139075073904e-018\\
8	6.22252621314041e-018\\
9	62.2722837303518e-021\\
10	626.926143738621e-024\\
11	6.33201052890568e-024\\
12	64.0704330029504e-027\\
13	648.981283928785e-030\\
14	6.57776086962746e-030\\
15	66.6955179700638e-033\\
16	675.821952137523e-036\\
17	6.93019120419735e-036\\
18	903.208089947173e-039\\
19	903.202168082542e-039\\
20	903.202237504442e-039\\
21	903.202286004748e-039\\
22	903.202286310726e-039\\
23	601.430739214642e-039\\
24	601.430763150465e-039\\
25	601.430682916213e-039\\
26	903.202278028676e-039\\
27	903.202180385739e-039\\
28	903.202157919388e-039\\
29	903.202188222412e-039\\
  };    
\addlegendentry{QN4};
\addplot [color=\QNfourplotcolor,solid,mark=\QNfourplot,mark options={solid},mark repeat={0}]
  table[row sep=crcr]{%
0	5.00437872279468e+000\\
1	125.091699745373e-009\\
2	78.4485362063254e-024\\
3	601.430821764591e-039\\
  };
\end{axis}
\end{tikzpicture}
\caption{The convergence of the three different methods when a) $c = x_0$ is further from ($a=0.15$) and 
b) closer to ($a=0.05$) the exact eigenvector $v$.}
 \label{fig:conv1}
\end{figure}

\begin{figure}[h!] 
\begin{center}
\setlength\figureheight{4cm} 
\setlength\figurewidth{4.5cm}
%
%
\begin{tikzpicture}
\begin{axis}[%
width=0.951\figurewidth,
height=\figureheight,
at={(0\figurewidth,0\figureheight)},
scale only axis,
unbounded coords=jump,
xmin=0,
xmax=15,
xlabel={iteration k},
ymin=0.43,
    y tick label style={
        /pgf/number format/.cd,
            fixed,
            fixed zerofill,
            precision=3,
        /tikz/.cd
    },
ymax=0.44,
yminorticks=true,
ylabel={Est. Conv. Factor},
axis background/.style={fill=white},
legend style={legend cell align=left,align=left,draw=white!15!black,at={(.95,0.95),anchor=north east}}
]
  \addlegendentry{QN1};
\addplot [color=\QNoneplotcolor,only marks, mark=\QNoneplot,mark options={solid}]
  table[row sep=crcr]{%
0	435.393119977979e-003\\
1	435.509599547901e-003\\
2	436.423040980929e-003\\
3	436.025063942616e-003\\
4	436.198720137691e-003\\
5	436.122995855957e-003\\
6	436.156025589985e-003\\
7	436.141620353390e-003\\
8	436.147903244231e-003\\
9	436.145163006445e-003\\
10	436.146358153965e-003\\
11	436.145836895881e-003\\
12	436.146064240644e-003\\
13	436.145965085162e-003\\
14	436.146008331433e-003\\
15	436.145989469746e-003\\
16	436.145997696195e-003\\
17	436.145994108262e-003\\
18	436.145995673125e-003\\
19	436.145994990616e-003\\
20	436.145995288290e-003\\
21	436.145995158461e-003\\
22	436.145995215085e-003\\
23	436.145995190389e-003\\
24	436.145995201160e-003\\
25	436.145995196462e-003\\
26	436.145995198511e-003\\
27	436.145995197617e-003\\
28	436.145995198007e-003\\
29	436.145995197837e-003\\
  };    
\addlegendentry{Exact};
\addplot [color=black,solid]
  table[row sep=crcr]{%
    0	436.145995197889e-003\\
    15	436.145995197889e-003\\    
  };    
\end{axis}
\end{tikzpicture}
%
%
\begin{tikzpicture}
\begin{axis}[%
width=0.951\figurewidth,
height=\figureheight,
at={(0\figurewidth,0\figureheight)},
scale only axis,
unbounded coords=jump,
xmin=0,
xmax=15,
xlabel={iteration k},
ymin=0.0,
ymax=0.02,
    y tick label style={
        /pgf/number format/.cd,
            fixed,
            fixed zerofill,
            precision=3,
        /tikz/.cd
    },
yminorticks=true,
ylabel={Est. Conv. Factor},
axis background/.style={fill=white},
legend style={legend cell align=left,align=left,draw=white!15!black,at={(.95,0.95),anchor=north east}}
]
\addlegendentry{QN2};
\addplot [color=\QNtwoplotcolor,solid,only marks,mark=\QNtwoplot,mark options={solid}]
  table[row sep=crcr]{%
0	435.393119977979e-003\\
1	1.01139152192595e-003\\
2	1.14575920070188e-003\\
3	2.00653158287291e-003\\
4	5.34090318000842e-003\\
5	8.59956972008424e-003\\
6	9.60194156753465e-003\\
7	9.89315518132219e-003\\
8	10.0119988511575e-003\\
9	10.0707451940346e-003\\
10	10.1022332471425e-003\\
11	10.1198717625814e-003\\
12	10.1300549586878e-003\\
13	10.1360645187934e-003\\
14	10.1397521496125e-003\\
15	10.1429523112571e-003\\
16	10.3451172019158e-003\\
17	36.1356442891994e-003\\
18	999.944859400287e-003\\
19	999.999909728344e-003\\
20	999.999993185942e-003\\
21	999.999995333993e-003\\
22	999.999999042197e-003\\
23	1.00000005978409e+000\\
24	1.00000003271740e+000\\
25	1.00000000781149e+000\\
26	999.999989896705e-003\\
27	1.00000005098146e+000\\
28	999.999990943224e-003\\
29	1.00000006269298e+000\\
  };    
\addlegendentry{QN3};
\addplot [color=\QNthreeplotcolor,solid,only marks,mark=\QNthreeplot,mark options={solid}]
  table[row sep=crcr]{%
0	435.393058969720e-003\\
1	1.01138071224700e-003\\
2	1.14577814770336e-003\\
3	2.00654482595304e-003\\
4	5.34091786808032e-003\\
5	8.59956205227317e-003\\
6	9.60193253415594e-003\\
7	9.89314931987235e-003\\
8	10.0119952730491e-003\\
9	10.0707430115360e-003\\
10	10.1022319091230e-003\\
11	10.1198709388948e-003\\
12	10.1300544416733e-003\\
13	10.1360638571771e-003\\
14	10.1396334042309e-003\\
15	10.1429523212337e-003\\
16	10.3451116305097e-003\\
17	36.1357085038395e-003\\
18	999.943450406252e-003\\
19	999.999973511695e-003\\
20	999.999902497219e-003\\
21	999.999948740513e-003\\
22	1.00000029597786e+000\\
23	999.999808075552e-003\\
24	1.00000004169320e+000\\
25	999.999896195493e-003\\
26	999.999971726345e-003\\
27	1.00000020261986e+000\\
28	1.00000009771436e+000\\
29	1.00000004778626e+000\\
  };    
\addlegendentry{Exact};
\addplot [color=black,solid]
  table[row sep=crcr]{%
    0	10.1453289895710e-003\\
15	10.1453289895710e-003\\    
  };    
\end{axis}

\end{tikzpicture}%
\end{center}
\caption{Estimated convergence factors of the three different methods. The solid line is the a priori computed spectral 
radius of $A$ in Corollary~\ref{cor:QNfactors} and the dashed line that of Theorem~\ref{thm:convQN1}.}
\label{fig:conv_factor}
\end{figure}
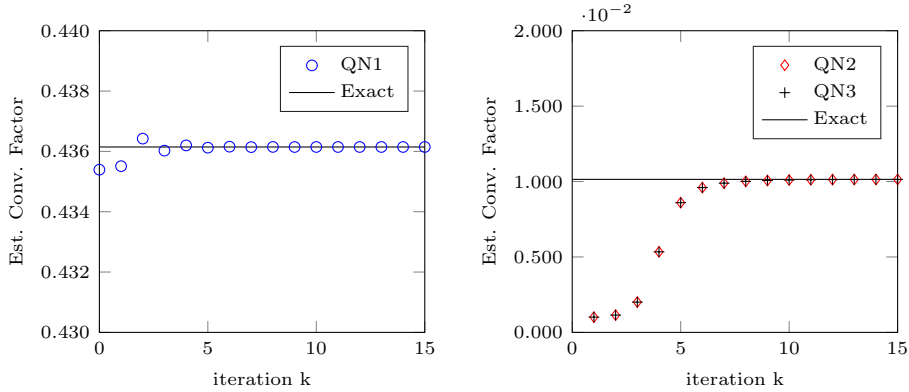
Then, we consider the initial value \gm{$(\mu_0, x_0)$} close to a left-end eigenvalue $\lambda \approx 9.07$,
such that again $\mu_0 = \lambda + 5.0$, 
$x_0 = v + a \cdot [ 1 \, \ldots \, 1 ]^T$ ($a>0$), and $c=x_0$ and $\sigma = \mu_0$. 
As can be expected from the bound given in~\ref{Cor:clustering}, the convergence is now slower since $\lambda$ \gm{is} in a cluster of eigenvalues. 
This is depicted by Figure~\ref{fig:conv2}.
Again, the performance of the 
\gm{QN1}
is found to be sensitive to the distance \ej{of $x_0$} from $v$.

As shown in Figure~\ref{fig:conv2}, the convergence is now slower than for the right-most eigenvalue,
as is expected from the clustering of the spectrum depicted in Figure~\ref{fig:spectrum},

\begin{figure}[!h] 
\centering
\setlength\figureheight{4cm} 
\setlength\figurewidth{4.5cm}
\subfigure[$a=0.2$]{
%
%
\begin{tikzpicture}
\begin{axis}[%
width=0.951\figurewidth,
height=\figureheight,
at={(0\figurewidth,0\figureheight)},
scale only axis,
unbounded coords=jump,
xmin=0,
xmax=30,
xlabel={Iteration $k$},
ymode=log,
ymin=1e-35,
ytick={1e2,1e-2,1e-6,1e-10,1e-14,1e-18,1e-22,1e-26,1e-30,1e-34},
xtick={10,20,30,40,50,60},
ymax=100,
grid,
yminorticks=true,
ylabel={Error in $(\mu_k,x_k)$},
axis background/.style={fill=white},
legend style={legend cell align=left,align=left,draw=white!15!black,at={(.6,0.43),anchor=north east}},
mark repeat={2}
]
  \addlegendentry{QN1};
\addplot [color=\QNoneplotcolor,solid,mark=\QNoneplot,mark options={solid}]
  table[row sep=crcr]{%
0	5.04879498292484e+000\\
1	1.68143091508368e+000\\
2	232.270160610137e-003\\
3	71.0330120456898e-003\\
4	17.4488584696863e-003\\
5	4.45585193951951e-003\\
6	1.12585678464126e-003\\
7	285.126830344826e-006\\
8	72.1628604160613e-006\\
9	18.2664013503437e-006\\
10	4.62354052559091e-006\\
11	1.17030869722075e-006\\
12	296.227299800294e-009\\
13	74.9807863780874e-009\\
14	18.9790657691724e-009\\
15	4.80396337459835e-009\\
16	1.21597470309229e-009\\
17	307.786376994708e-012\\
18	77.9065991747507e-012\\
19	19.7196453437004e-012\\
20	4.99141814151760e-012\\
21	1.26342308035927e-012\\
22	319.796465598223e-015\\
23	80.9465815521604e-015\\
24	20.4891228323064e-015\\
25	5.18618756206329e-015\\
26	1.31272293348207e-015\\
27	332.275198200544e-018\\
28	84.1051866492121e-018\\
29	21.2886259931730e-018\\
30      5.38855705258037e-018\\
  };    
\addlegendentry{QN2};
\addplot [color=\QNtwoplotcolor,solid,mark=\QNtwoplot,mark options={solid}]
  table[row sep=crcr]{%
0	5.04879498292484e+000\\
1	1.68143091508368e+000\\
2	194.238486416436e-003\\
3	37.4798008525940e-003\\
4	4.98092486207973e-003\\
5	873.066563222340e-006\\
6	123.119273465852e-006\\
7	20.5740827057074e-006\\
8	3.00881302799109e-006\\
9	488.436706124765e-009\\
10	73.0501082458484e-009\\
11	11.6499258494957e-009\\
12	1.76659007230461e-009\\
13	278.679069501809e-012\\
14	42.6198317127584e-012\\
15	6.67839281648336e-012\\
16	1.02672361508419e-012\\
17	160.223477725725e-015\\
18	24.7119596576428e-015\\
19	3.84663278387782e-015\\
20	594.460162842848e-018\\
21	92.3890441589772e-018\\
22	14.2952634110700e-018\\
23	2.21959771566512e-018\\
24	343.693855141837e-021\\
25	53.3332773427868e-021\\
26	8.26220900688366e-021\\
27	1.28163830670535e-021\\
28	198.603387538345e-024\\
29	30.8006065512822e-024\\
  };    
\addlegendentry{QN3};
\addplot [color=\QNthreeplotcolor,solid,mark=\QNthreeplot,mark options={solid}]
  table[row sep=crcr]{%
0	5.04879498292484e+000\\
1	1.68455483167612e+000\\
2	194.140826585185e-003\\
3	37.5157069187103e-003\\
4	4.98248396015306e-003\\
5	873.780276095544e-006\\
6	123.173645772631e-006\\
7	20.5893413945418e-006\\
8	3.01034846439905e-006\\
9	488.775628167338e-009\\
10	73.0903819923114e-009\\
11	11.6576609133821e-009\\
12	1.76760787720316e-009\\
13	278.858908562855e-012\\
14	42.6450312854783e-012\\
15	6.68262546509393e-012\\
16	1.02734017418626e-012\\
17	160.323881729446e-015\\
18	24.7269395157173e-015\\
19	3.84902635258731e-015\\
20	594.822579321475e-018\\
21	92.4462828514194e-018\\
22	14.3040091492506e-018\\
23	2.22096914051407e-018\\
24	343.904575756196e-021\\
25	53.3661756414419e-021\\
26	8.26728128126788e-021\\
27	1.28242806726395e-021\\
28	198.725411217915e-024\\
29	30.8195742703153e-024\\
  };    
\addlegendentry{QN4};
\addplot [color=\QNfourplotcolor,solid,mark=\QNfourplot,mark options={solid},mark repeat={0}]
  table[row sep=crcr]{%
0	5.04879498292484e+000\\
1	1.33206550584350e-003\\
2	122.879862216785e-012\\
3	1.04558241342525e-024\\
4	45.9586591057445e-039\\
  };
\end{axis}
\end{tikzpicture}
\subfigure[$a=0.1$]{
%
%
\begin{tikzpicture}
\begin{axis}[%
width=0.951\figurewidth,
height=\figureheight,
at={(0\figurewidth,0\figureheight)},
scale only axis,
unbounded coords=jump,
xmin=0,
xmax=30,
xlabel={Iteration $k$},
ymode=log,
ymin=1e-35,
ytick={1e2,1e-2,1e-6,1e-10,1e-14,1e-18,1e-22,1e-26,1e-30,1e-34},
xtick={10,20,30,40,50,60},
ymax=100,
grid,
yminorticks=true,
ylabel={Error in $(\mu_k,x_k)$},
axis background/.style={fill=white},
legend style={legend cell align=left,align=left,draw=white!15!black,at={(.6,0.43),anchor=north east}},
mark repeat={2}
]
  \addlegendentry{QN1};
\addplot [color=\QNoneplotcolor,solid,mark=\QNoneplot,mark options={solid}]
  table[row sep=crcr]{%
0	5.01665337723374e+000\\
1	861.274281893122e-003\\
2	18.0704042460889e-003\\
3	11.4155871604931e-003\\
4	1.29721816617978e-003\\
5	246.385605502119e-006\\
6	37.9123427071788e-006\\
7	6.31857883231294e-006\\
8	1.02086323508735e-006\\
9	166.926728631933e-009\\
10	27.1688352301241e-009\\
11	4.42990781766605e-009\\
12	721.801260482539e-012\\
13	117.640495671950e-012\\
14	19.1712811505266e-012\\
15	3.12437237442711e-012\\
16	509.175774823696e-015\\
17	82.9803514171870e-015\\
18	13.5232726588200e-015\\
19	2.20388397194245e-015\\
20	359.166232213917e-018\\
21	58.5332082742162e-018\\
22	9.53913811281310e-018\\
23	1.55459029887512e-018\\
24	253.351083735187e-021\\
25	41.2885451986292e-021\\
26	6.72878101616554e-021\\
27	1.09658729232967e-021\\
28	178.710480635913e-024\\
29	29.1243899273947e-024\\
30      4.74639252047743e-024\\
  };    
\addlegendentry{QN2};
\addplot [color=\QNtwoplotcolor,solid,mark=\QNtwoplot,mark options={solid}]
  table[row sep=crcr]{%
0	5.01665337723374e+000\\
1	861.274281893122e-003\\
2	97.5332457148065e-003\\
3	18.7640714474756e-003\\
4	2.49160881448353e-003\\
5	436.810844415749e-006\\
6	61.5859701228028e-006\\
7	10.2930289950323e-006\\
8	1.50510035863861e-006\\
9	244.354339763971e-009\\
10	36.5426781788776e-009\\
11	5.82811549549693e-009\\
12	883.732668666749e-012\\
13	139.413594834003e-012\\
14	21.3206418891127e-012\\
15	3.34095112989207e-012\\
16	513.622674475106e-015\\
17	80.1535391838530e-015\\
18	12.3622950615496e-015\\
19	1.92431552145652e-015\\
20	297.382536621022e-018\\
21	46.2184540829343e-018\\
22	7.15130582448396e-018\\
23	1.11037284631756e-018\\
24	171.935380138194e-021\\
25	26.6804163956134e-021\\
26	4.13323259134328e-021\\
27	641.150031254281e-024\\
28	99.3528734995043e-024\\
29	15.4082518838831e-024\\
30      2.38808680931239e-024\\
  };  
\addlegendentry{QN3};
\addplot [color=\QNthreeplotcolor,solid,mark=\QNthreeplot,mark options={solid}]
  table[row sep=crcr]{%
0	5.01665337723374e+000\\
1	861.559410620604e-003\\
2	97.4935374010995e-003\\
3	18.7667639895928e-003\\
4	2.49143880328284e-003\\
5	436.852940979449e-006\\
6	61.5844289185983e-006\\
7	10.2937685960910e-006\\
8	1.50509612751321e-006\\
9	244.368131524381e-009\\
10	36.5430591418577e-009\\
11	5.82838814127323e-009\\
12	883.748962717335e-012\\
13	139.419278442049e-012\\
14	21.3211390644620e-012\\
15	3.34107488569321e-012\\
16	513.636184615757e-015\\
17	80.1563237166146e-015\\
18	12.3626428472072e-015\\
19	1.92437963979333e-015\\
20	297.391236642577e-018\\
21	46.2199536514940e-018\\
22	7.15151996902024e-018\\
23	1.11040827451998e-018\\
24	171.940601565018e-021\\
25	26.6812588314188e-021\\
26	4.13335918822775e-021\\
27	641.170144820267e-024\\
28	99.3559324937008e-024\\
29	15.4087333236552e-024\\
30    2.38816057176621e-024\\
  };
\addlegendentry{QN4};
\addplot [color=\QNfourplotcolor,solid,mark=\QNfourplot,mark options={solid},mark repeat={0}]
  table[row sep=crcr]{%
0	5.01665337723374e+000\\
1	1.33206550584350e-003\\
2	122.879862216785e-012\\
3	1.04558241342525e-024\\
4	45.9586591057445e-039\\
  };
\end{axis}
\end{tikzpicture}
\caption{The convergence of the three different methods when a) $c = x_0$ is further from ($a=0.2$) and 
b) closer to ($a=0.1$) the exact eigenvector $v$.}
\label{fig:conv2}
\end{figure}

\subsection{Quadratic eigenvalue problem}

In this \ej{section} we illustrate the influence of the eigenvalue clustering on the convergence factor.
The bounds for the convergence factor illustrating this effect were 
derived in Section~\ref{sec:localconv} and can be clearly identified from the following example.
Consider the quadratic eigenvalue problem
\begin{equation} \label{eq:M_circle}
M(\lambda) = \lambda^2 I - \lambda (A_1 + A_2) +  A_1 A_2,
\end{equation}
where $A_1, A_2 \in \mathbb{C}^{10 \times 10}$ are diagonal matrices.
\ej{The set eigenvalues of this problem is the
  uninion of eigenvalues $A_1$ and $A_2$.}
We choose the diagonal elements of $A_1$ and $A_2$ such that eigenvalues of $M(\lambda)$
are $0.1$ and $19$ equally distributed points on the \ak{circle of radius $r$,} 
as illustrated in Figure~\ref{fig:spectrum2} for $r=0.5$.
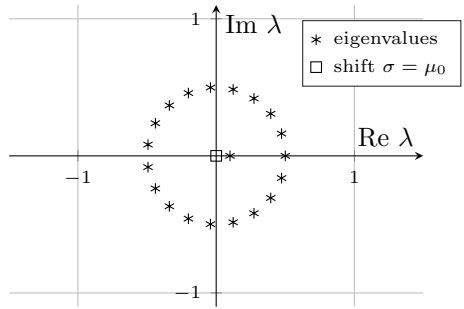
\begin{figure}[h!] 
\begin{center}
\setlength\figureheight{4cm} 
\setlength\figurewidth{4.5cm}
%
%
\definecolor{mycolor1}{rgb}{1.00000,0.00000,1.00000}%
\begin{tikzpicture}
    \pgfkeys{%
    /pgf/number format/set thousands separator = {}}
\begin{axis}[%
width=1.21\figurewidth,
height=\figureheight,
at={(0\figurewidth,0\figureheight)},
scale only axis,
xmin=-0.7,
xmax=0.7,
ymin=-1.1,
ymax=1.1,
axis equal,
axis lines=center, 
grid,
ylabel={Im $\lambda$},
xlabel={Re $\lambda$},
xtick={-1, 0,1},
ytick={-1, 0,1},
axis background/.style={fill=white},
legend style={at={(0.9,0.95)},anchor=north,legend cell align=left,align=left,draw=white!15!black}
]
\addplot [color=black,only marks,mark=\eigplot,mark options={solid}]
  table[row sep=crcr]{%
    500.000000000000e-003  0.00000000000000e+000   \\
    472.908620850317e-003   162.349734602342e-003  \\
    472.908620850317e-003  -162.349734602342e-003  \\
    394.570254698197e-003   307.106356344834e-003  \\
    394.570254698197e-003  -307.106356344834e-003  \\
    273.474079061213e-003   418.583239131264e-003  \\
    273.474079061213e-003  -418.583239131264e-003  \\
    122.742743570400e-003   484.700132969665e-003  \\
    122.742743570399e-003  -484.700132969665e-003  \\
    100.000000000000e-003   0.00000000000000e+000  \\
   -41.2896727361661e-003   498.292246503335e-003  \\
   -41.2896727361664e-003  -498.292246503335e-003  \\
   -200.847712326485e-003   457.886663327529e-003  \\
   -200.847712326485e-003  -457.886663327529e-003  \\
   -338.640785812870e-003   367.861955336566e-003  \\
   -338.640785812871e-003  -367.861955336566e-003  \\
   -439.736875603244e-003   237.973696518537e-003  \\
   -439.736875603245e-003  -237.973696518537e-003  \\
   -493.180651701361e-003   82.2972951403670e-003  \\
   -493.180651701361e-003  -82.2972951403669e-003  \\
};
\addlegendentry{~eigenvalues};
\addplot [color=black,only marks,mark=\shiftplot,mark options={solid}]
  table[row sep=crcr]{%
   0 0\\
};
\addlegendentry{~shift $\sigma=\mu_0$};
\end{axis}
\end{tikzpicture}%
\end{center}
\caption{Spectrum of the quadratic problem \eqref{eq:M_circle} when $r=0.5$.}
\label{fig:spectrum2}
\end{figure}
  
We first place the initial eigenpair approximation \gm{$(\mu_0, x_0)$} to the origin, such that $\mu_0 = 0$ and 
$x_0 = v + 0.1 \cdot [ 1 \, \ldots \, 1 ]^T$,
where $v$ is the exact eigenvector corresponding to the eigevalue 0.1. 
We set $c=x_0$ and $\sigma = \mu_0$. 
Figure~\ref{fig:conv_poly} shows the convergence of the methods and also the numerically estimated convergence factors
for the three first quasi-Newton methods.
\gm{QN1}
is found again to have the slowest convergence, and
the a priori convergence factor estimates computed using Theorem~\ref{thm:convQN1} and \gm{Corollary~\ref{cor:QNfactors}}
are again found to be sharp.

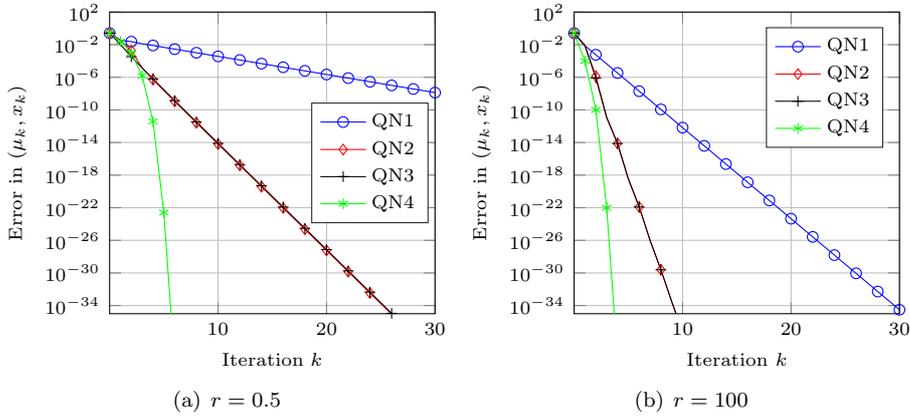
\begin{figure}[!h] 
\centering
\setlength\figureheight{4cm} 
\setlength\figurewidth{4.5cm}
\subfigure[$r=0.5$]{
%
%
\begin{tikzpicture}
\begin{axis}[%
width=0.951\figurewidth,
height=\figureheight,
at={(0\figurewidth,0\figureheight)},
scale only axis,
unbounded coords=jump,
xmin=0,
xmax=30,
xlabel={Iteration $k$},
ymode=log,
ymin=1e-35,
ytick={1e2,1e-2,1e-6,1e-10,1e-14,1e-18,1e-22,1e-26,1e-30,1e-34},
xtick={10,20,30,40,50,60},
ymax=100,
grid,
yminorticks=true,
ylabel={Error in $(\mu_k,x_k)$},
axis background/.style={fill=white},
legend style={legend cell align=left,align=left,draw=white!15!black,at={(.98,0.7),anchor=north east}},
mark repeat={2}
]
  \addlegendentry{QN1};
\addplot [color=\QNoneplotcolor,solid,mark=\QNoneplot,mark options={solid}]
  table[row sep=crcr]{%
0	259.259545168478e-003\\
1	35.3342540410338e-003\\
2	23.4179089216051e-003\\
3	12.8424417039339e-003\\
4	8.02357773373659e-003\\
5	4.70021102052737e-003\\
6	2.84920445761074e-003\\
7	1.69957639204413e-003\\
8	1.02071272559933e-003\\
9	611.612340923818e-006\\
10	366.548647490936e-006\\
11	219.802053884550e-006\\
12	131.706675420227e-006\\
13	78.9725036797795e-006\\
14	47.3276994931955e-006\\
15	28.3739202177032e-006\\
16	17.0063646213289e-006\\
17	10.1947451037437e-006\\
18	6.11076611272128e-006\\
19	3.66304775848537e-006\\
20	2.19570153697555e-006\\
21	1.31617367257903e-006\\
22	788.947493757525e-009\\
23	472.917727814616e-009\\
24	283.479609088273e-009\\
25	169.925522302506e-009\\
26	101.858012596929e-009\\
27	61.0564883970212e-009\\
28	36.5989371430677e-009\\
29	21.9384064767915e-009\\
30      13.1504837290315e-009\\
  };    
\addlegendentry{QN2};
\addplot [color=\QNtwoplotcolor,solid,mark=\QNtwoplot,mark options={solid}]
  table[row sep=crcr]{%
0	259.259545168478e-003\\
1	35.3342540410338e-003\\
2	1.89066694827084e-003\\
3	14.4361074699171e-006\\
4	519.847077243198e-009\\
5	23.7506862947524e-009\\
6	1.12127295146481e-009\\
7	53.8737229688285e-012\\
8	2.61541521859507e-012\\
9	127.896078140961e-015\\
10	6.28823207310517e-015\\
11	310.273712088370e-018\\
12	15.3437212622757e-018\\
13	760.024772527884e-021\\
14	37.6976269610053e-021\\
15	1.87178593369082e-021\\
16	93.0024142963469e-024\\
17	4.62261144924141e-024\\
18	229.798404636386e-027\\
19	11.4244421256935e-027\\
20	567.993826421648e-030\\
21	28.2407178120988e-030\\
22	1.40421504952568e-030\\
23	69.8256601229396e-033\\
24	3.47226998617401e-033\\
25	172.671459957109e-036\\
26	8.58688791082188e-036\\
27	427.171581725612e-039\\
28	21.4345090941739e-039\\
29	2.27966882055557e-039\\
  };    
\addlegendentry{QN3};
\addplot [color=\QNthreeplotcolor,solid,mark=\QNthreeplot,mark options={solid}]
  table[row sep=crcr]{%
0	259.259545168478e-003\\
1	9.98235023835202e-003\\
2	334.534009707049e-006\\
3	13.9961070105821e-006\\
4	627.467741522253e-009\\
5	29.1708193455495e-009\\
6	1.39017324993530e-009\\
7	67.4353107514800e-012\\
8	3.31065239906249e-012\\
9	163.695241780531e-015\\
10	8.11741049425208e-015\\
11	402.569126823326e-018\\
12	19.9472043881884e-018\\
13	987.806452361823e-021\\
14	48.9256156200234e-021\\
15	2.42524404158591e-021\\
16	120.355390882836e-024\\
17	5.97940561690051e-024\\
18	297.319148162443e-027\\
19	14.7912412776711e-027\\
20	735.958285047761e-030\\
21	36.6150352941344e-030\\
22	1.82124926996500e-030\\
23	90.5688552866368e-033\\
24	4.50314899956444e-033\\
25	223.883564158719e-036\\
26	11.1309693669558e-036\\
27	553.480607951483e-039\\
28	27.4996196281083e-039\\
29	1.37495923815412e-039\\
  };   
\addlegendentry{QN4};
\addplot [color=\QNfourplotcolor,solid,mark=\QNfourplot,mark options={solid},mark repeat={0}]
  table[row sep=crcr]{%
0	259.259545168478e-003\\
1	24.8941240835058e-003\\
2	959.390668097561e-006\\
3	1.53308861385027e-006\\
4	3.92470438499221e-012\\
5	25.7209426576332e-024\\
6	6.00812618290976e-042\\
  };
\end{axis}
\end{tikzpicture}
\subfigure[$r=100$]{
%
%
\begin{tikzpicture}
\begin{axis}[%
width=0.951\figurewidth,
height=\figureheight,
at={(0\figurewidth,0\figureheight)},
scale only axis,
unbounded coords=jump,
xmin=0,
xmax=30,
xlabel={Iteration $k$},
ymode=log,
ymin=1e-35,
ytick={1e2,1e-2,1e-6,1e-10,1e-14,1e-18,1e-22,1e-26,1e-30,1e-34},
xtick={10,20,30,40,50,60},
ymax=100,
grid,
yminorticks=true,
ylabel={Error in $(\mu_k,x_k)$},
axis background/.style={fill=white},
legend style={legend cell align=left,align=left,draw=white!15!black,at={(.95,0.95),anchor=north east}},
mark repeat={2}
]
  \addlegendentry{QN1};
\addplot [color=\QNoneplotcolor,solid,mark=\QNoneplot,mark options={solid}]
  table[row sep=crcr]{%
0	259.259545168478e-003\\
1	7.54334903675706e-003\\
2	576.677337760449e-006\\
3	44.0426967811988e-006\\
4	3.36393562076045e-006\\
5	256.932444405732e-009\\
6	19.6241300143015e-009\\
7	1.49886273335617e-009\\
8	114.480973044595e-012\\
9	8.74389154858286e-012\\
10	667.845820848630e-015\\
11	51.0091002325801e-015\\
12	3.89600148014907e-015\\
13	297.570971926863e-018\\
14	22.7280415022095e-018\\
15	1.73593501806057e-018\\
16	132.588212083121e-021\\
17	10.1268963414536e-021\\
18	773.477731536601e-024\\
19	59.0771131658618e-024\\
20	4.51222466751376e-024\\
21	344.637209894762e-027\\
22	26.3228928513196e-027\\
23	2.01050457747623e-027\\
24	153.559438884032e-030\\
25	11.7286483873496e-030\\
26	895.817241782556e-033\\
27	68.4212284637681e-033\\
28	5.22591473372376e-033\\
29	399.147834334763e-036\\
30      30.4863099395493e-036\\
  };    
\addlegendentry{QN2};
\addplot [color=\QNtwoplotcolor,solid,mark=\QNtwoplot,mark options={solid}]
  table[row sep=crcr]{%
0	259.259545168478e-003\\
1	7.54334903675706e-003\\
2	1.39468425360273e-006\\
3	11.7795033562986e-012\\
4	7.56170121798890e-015\\
5	340.187802435262e-021\\
6	125.106053539333e-024\\
7	10.0255161209891e-027\\
8	2.47195965580087e-030\\
9	304.614864996360e-036\\
10	53.2126481393879e-039\\
11	1.28577658090848e-039\\
12	1.28570794094873e-039\\
13	1.28570794094830e-039\\
14	1.28570794094830e-039\\
15	1.28570794094830e-039\\
16	1.28570794094830e-039\\
17	1.28570794094830e-039\\
18	1.28570794094830e-039\\
19	1.28570794094830e-039\\
20	1.28570794094830e-039\\
21	1.28570794094830e-039\\
22	1.28570794094830e-039\\
23	1.28570794094830e-039\\
24	1.28570794094830e-039\\
25	1.28570794094830e-039\\
26	1.28570794094830e-039\\
27	1.28570794094830e-039\\
28	1.28570794094830e-039\\
29	1.28570794094830e-039\\
  };    
\addlegendentry{QN3};
\addplot [color=\QNthreeplotcolor,solid,mark=\QNthreeplot,mark options={solid}]
  table[row sep=crcr]{%
0	259.259545168478e-003\\
1	7.42938441488100e-003\\
2	825.143399542806e-009\\
3	12.0418328346941e-012\\
4	7.54060179522387e-015\\
5	344.693270259621e-021\\
6	124.586512800289e-024\\
7	10.1111429054436e-027\\
8	2.45755979514331e-030\\
9	306.367148671685e-036\\
10	52.7628864650267e-039\\
11	184.785645513743e-042\\
12	184.591590589310e-042\\
13	184.591590568566e-042\\
14	184.591590568568e-042\\
15	184.591590568568e-042\\
16	184.591590568568e-042\\
17	184.591590568568e-042\\
18	184.591590568568e-042\\
19	184.591590568568e-042\\
20	184.591590568568e-042\\
21	184.591590568568e-042\\
22	184.591590568568e-042\\
23	184.591590568568e-042\\
24	184.591590568568e-042\\
25	184.591590568568e-042\\
26	184.591590568568e-042\\
27	184.591590568568e-042\\
28	184.591590568568e-042\\
29	184.591590568568e-042\\
  };    
\addlegendentry{QN4};
\addplot [color=\QNfourplotcolor,solid,mark=\QNfourplot,mark options={solid},mark repeat={0}]
  table[row sep=crcr]{%
0	259.259545168478e-003\\
1	100.098732158676e-006\\
2	100.098637483653e-012\\
3	100.098637483574e-024\\
4	27.8565196529942e-042\\
  };
\end{axis}
\end{tikzpicture}
\caption{Convergence of the four different methods for the quadratic problem \eqref{eq:M_circle} when a) $r=0.5$, b) $r=100$.}
\label{fig:conv_poly}
\end{figure}

\gm{
Let $\rho_r$ be the spectral radius of the iteration matrix $B$ given in \gm{Corollary~\ref{cor:QNfactors}}, i.e.,
the convergence factor for 
QN2 and QN3
 (the residual inverse iteration).
In Figure~\ref{fig:rho_r} we
illustrate
 how $\rho_r$ behaves as a function of $r$, the
radius of the circle. 
The value of 
$\rho_r$ is computed for $10$ different values of $r$ varying from $10^{-1/2}$ to $10^5$.}
As expected from Corollary~\ref{Cor:clustering} we observe that $\rho_r \sim 1/r$. 

  \begin{figure}[h!] 
\begin{center}
\setlength\figureheight{4cm} 
\setlength\figurewidth{5.5cm}
%
%
\begin{tikzpicture}
\begin{axis}[%
width=0.951\figurewidth,
height=\figureheight,
at={(0\figurewidth,0\figureheight)},
scale only axis,
unbounded coords=jump,
xmin=-2,
xmax=6,
xlabel={$\ln(r)$},
ymode=log,
ymin=1e-9,
ytick={1e2,1e0,1e-2,1e-4,1e-6,1e-8},
ymax=1e3,
grid,
ylabel={$\rho_r$},
legend style={legend cell align=left,align=left,draw=white!15!black,at={(.95,0.95),anchor=north east}}
]
\addlegendentry{$\rho_r$ for QN2 and QN3 };
\addplot [color=red,solid,mark=o,mark options={solid}]
  table[row sep=crcr]{%
   -1.00000000000000e+000    18.7964887882518e-003\\
   -333.333333333333e-003    3.55282009758731e-003\\
    333.333333333333e-003    765.185411881937e-006\\
    1.00000000000000e+000    165.081531917725e-006\\
    1.66666666666667e+000    35.5786645399032e-006\\
    2.33333333333333e+000    7.66581461443115e-006\\
    3.00000000000000e+000    1.65157887993336e-006\\
    3.66666666666667e+000    355.823240319143e-009\\
    4.33333333333333e+000    76.6598562607189e-009\\
    5.00000000000000e+000    16.5158682912017e-009\\
  };
  \addlegendentry{reference slope $-1$};
\addplot [color=blue,solid,mark=x,mark options={solid}]
  table[row sep=crcr]{%
   -1.00000000000000e+000    1.87964887882518e+000\\
   -333.333333333333e-003    404.958074962050e-003\\
    333.333333333333e-003    87.2455724706773e-003\\
    1.00000000000000e+000    18.7964887882518e-003\\
    1.66666666666667e+000    4.04958074962050e-003\\
    2.33333333333333e+000    872.455724706772e-006\\
    3.00000000000000e+000    187.964887882518e-006\\
    3.66666666666667e+000    40.4958074962050e-006\\
    4.33333333333333e+000    8.72455724706773e-006\\
    5.00000000000000e+000    1.87964887882518e-006\\
  };
\end{axis}
\end{tikzpicture}%
\end{center}
\caption{Plot of the convergence factor $\rho_r$ versus $r$ for QN2 and the quadratic problem
\eqref{eq:M_circle}.}
\label{fig:rho_r}
\end{figure}
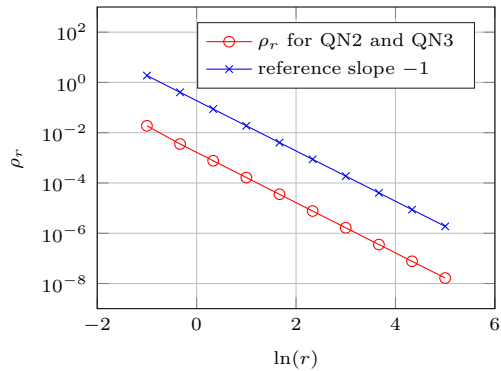

\subsection{A large scale problem}
 
We now consider a
large--scale NEP 
which arises in the study of waves traveling
in a periodic medium
\cite{WAVEGUIDE_ARNOLDI_2015,Tausch:2000:WAVEGUIDE}.
\gm{More precisely, the waveguide eigenvalue problem, without the Cayley transformation,
associated to 
the waveguide described in
\cite[Section 5.2]{WAVEGUIDE_ARNOLDI_2015} 
is considered. 
The problem is formulated as}
\begin{align*}
 M(\lambda) = 
 \begin{pmatrix}
  Q(\lambda)		&	C_1(\lambda)	\\
  C_2^T			&	P(\lambda)
 \end{pmatrix}
 .
\end{align*}
We \ak{choose} the discretization parameters $n_x=200$ and $n_z=201$, which means that the size of the NEP is $n=n_x n_z+2 n_z=40602$. 
The matrix $C_2^T$ and the second degree polynomials $Q(\lambda)$ and $C_1(\lambda)$ are \ak{sparse, and the matrix} $P(\lambda)$ is dense and it 
is defined by nonlinear functions of $\lambda$ involving 
square roots of polynomials. 
\ak{The matrix--vector product} 
$P(\lambda) w$ is efficiently computed using two Fast Fourier Transforms (FFTs) and a multiplication with a diagonal matrix. 
The linear systems involving the matrix $M(\sigma)$ can be solved by precomputing a Schur complement. See \cite{WAVEGUIDE_ARNOLDI_2015} for a full description of the problem.

We \ak{compare} \gm{QN1, QN2 and QN3} (residual inverse iteration) for approximating 
a specific eigenpair. 
\gm{The pair $(\lambda,v)$ denotes the accurate approximation of the wanted eigenpair. }
\ej{The shift $\sigma$ is selected} close to the wanted eigenvalue $\lambda$, \ak{more precisely, $| \sigma - \lambda | \approx 0.42$ (see Figure~\ref{FIG:waveguide_spectrum}),
and the initial guess $x_0$ is selected such that $\| x_0 - v \| \approx 10^{-5}$.} 
The error is computed as the absolute value of the 
distance between the wanted eigenvalue and the current eigenvalue approximation, 
namely $| \mu_k - \lambda |$. 
If an initial guess of the eigenvector is provided, 
all the methods present 
similar convergence rate and \gm{QN2} and \gm{QN3} are slightly faster then \gm{QN1}
\ak{(see Figure~\ref{FIG:waveguideQN123}.)}
\gm{
The block $(1,2)$ of the matrix $A_1$ defined in \eqref{eq:A1def} has norm approximatively $10^{-3}$. 
This may suggest that the spectral radius of this matrix is close to the spectral radius of $B$ given in \eqref{eq:EliasConvRate}.
In particular the convergence rate of  
\gm{QN1} is expected to be close to the convergence rate of \gm{QN2} and \gm{QN3}. This is consistent with the numerical simulation.}
With a random initial guess $x_0$ of the eigenvector, 
\gm{QN1} does not converge 
whereas \gm{QN2} and \gm{QN3} still converge with the same convergence rate but with they require more iterations since the initial error is larger. See Figure~\ref{FIG:waveguideQN123}.
This can be justified observing that, in this case, the block $(1,2)$ of the matrix $A_1$ defined in \eqref{eq:A1def} has norm approximatively $10^{-1}$. 
\gm{Therefore, the convergence factor of QN1 is expected to be significantly different from QN2 and QN3 by using the previous reasoning.} 

 \begin{figure} 
 \begin{minipage}[b]{\textwidth}
	\centering
	\setlength\figureheight{4cm} 
	\setlength\figurewidth{10cm}
%
%
\definecolor{mycolor1}{rgb}{1.00000,0.00000,1.00000}%
\begin{tikzpicture}

\begin{axis}[%
width=0.951\figurewidth,
height=\figureheight,
at={(0\figurewidth,0\figureheight)},
scale only axis,
xmin=-4.5,
xmax=0.1,
ymin=-7,
ymax=0.3,
xtick={-1,-2,-3,-4,-5},
axis lines=center, 
axis background/.style={fill=white},
legend style={at={(0.4,0.27)},anchor=north,legend cell align=left,align=left,draw=white!15!black}
]
\addplot [color=black,only marks,mark=\eigplot,mark options={solid}]
  table[row sep=crcr]{%
-3.18753056861896	-3.14231088534989\\
-1.34903031021354	-1.78407133228711\\
-1.34620888618634	-4.50007324031644\\
-0.559230036676164	-2.17853058817262\\
-0.556643685387807	-4.11369935864428\\
-0.000461525568890464	-3.78060256693498\\
-0.000756308592683583	-2.51619891702154\\
-1.15947397845453	-0.225620833634788\\
-1.16037030055377	-6.06141023171197\\
-4.26799400753975	-0.261146693581903\\
-4.26701951355202	-6.02603392812274\\
-0.50220495759151	-0.371726054947731\\
-0.500836411918141	-5.92057022297237\\
};
\addlegendentry{~eigenvalues};

\addplot [color=red,only marks,mark=o,mark options={solid}]
  table[row sep=crcr]{%
-1.34620888618634	-4.50007324031644\\
};
\addlegendentry{~computed $\lambda$};

\addplot [color=mycolor1,only marks,mark=\shiftplot,mark options={solid}]
  table[row sep=crcr]{%
-1.64620888618634	-4.20007324031644\\
};
\addlegendentry{~shift $\sigma$};

\end{axis}
\end{tikzpicture}%
	\caption{Spectrum of the waveguide eigenvalue problem described in 
		  \cite[Section 5.2]{WAVEGUIDE_ARNOLDI_2015}}
	\label{FIG:waveguide_spectrum}
\end{minipage} \\[1cm]
\begin{minipage}[b]{\textwidth}
	\centering
	\setlength\figureheight{4cm} 
	\setlength\figurewidth{10cm}
	\input{ waveguide_QN123_conv.tikz}
	\caption{Convergence of QN1, QN2 and QN3 for computing 
	the eigenvalue \ej{$\lambda$}.}	 \label{FIG:waveguideQN123}
\end{minipage}
\end{figure}

\section{Conclusions}

We have here presented four iterative methods and showed
how they can be analyzed with techniques of quasi-Newton
methods, and \ak{Keldysh's theorem}. We have also
illustrated how \ak{two} well-established methods
can be interpreted as quasi-Newton methods.
\gm{
A secondary conclusion that we wish to stress
here is that, in general, methods of this type \ej{can in an insightful way be analyzed} in
the framework of quasi-Newton methods.}
We have illustrated
how linear, as well as higher order convergence,
can be approached with quasi-Newton results. There
are many results for quasi-Newton methods,
which can potentially be applied to these methods, e.g.,
techniques which improve the convergence basin \cite{eisenstat1994globally,gonzalez2009newton},
scaling techniques \cite{deuflhard2011newton}, 
adaption for non-smooth (or almost non-smooth) problems 
\cite{Mietanski:2011:INEXACT} and other convergence results
\cite{ypma1984local}.

The method corresponding to keeping only the (1,1)-block constant,
i.e., \eqref{eq:QN2}, is to our knowledge new method in the context
of NEPs. Moreover, although it has several similarities
with residual inverse iteration in terms of convergence
and behavior for linear problems, it can be more attractive
than residual inverse iteration. Note that unlike QN2,
residual inverse iteration requires the
solution to a nonlinear scalar problem. For some NEPs,
the action of $M(\lambda)$ is only implicitly
available, e.g., in the form
of a differential equation as in \cite{Rott:2010:ITERATIVE}.
\gm{Therefore,}
the computation of a solution to the scalar nonlinear
equation \eqref{eq:rf} is computationally more demanding
than computing the product $M'(\mu)r$ which is required in \eqref{eq:zkdef}.

We do have a negative conclusion in this paper. We have concluded that QN1
does not appear very competitive in practice since better convergence
is usually achieved from QN2, although QN1 is based on the most common quasi-Newton approach,
i.e., keeping the \gm{Jacobian matrix} constant.
\ej{There is another important Newton-like algorithm
which we have not considered in this manuscript,
the Jacobi-Davidson algorithm (\cite{Schreiber:2008:PHD,Sleijpen:1996:POLYNOMIAL}). Although
the Jacobi-Davidson algorithm does
have interpretations in terms of Newton's method, as
e.g., pointed out in \cite[Section~6]{sleijpen2006jacobi},
our results here are not directly applicable. In \cite[Section~6]{sleijpen2006jacobi} the authors point out that the correction equation
of the Jacobi-Davidson algorithm can be derived from Newton's method
on the nonlinear equation $G(x)=M(p(x))x$ where $p(x)$ is the Rayleigh functional. Note that this nonlinear equation is different from
our augmented system \eqref{eq:augsys}. Moreover, since
the solution is a manifold and the solution (eigenvector)
is not isolated in the standard sense which prevents us from directly
applying results for quasi-Newton methods.}

Finally, we wish to specifically stress that the value of the presented results
may be of interested considerably beyond the scope of the presented
methods.  Several of the methods presented here form the basis
of other state-of-the-art such as the subspace
accelerated extensions of residual inverse iteration (the nonlinear
Arnoldi method \cite{Voss:2004:ARNOLDI}), preconditioned versions \cite{Effenberger:2014:RESINV}
and inner-outer-iteration constructions as in \cite{Xue:2011:EFFICIENT_TR}.
Those extensions may possibly also be interpreted in a quasi-Newton
setting, although the extension
 requires attention beyond this manuscript.
\section*{Acknowledgements}
We thank Wim Michiels (KU Leuven) for valuable
discussions regarding partial fraction expansions for 
time-delay systems.
\bibliographystyle{plain}
\bibliography{eliasbib,misc,functions-of-matrices}

\end{document}